\newcolumntype{^}{>{\currentrowstyle}}
\journal{arXiv}
\newtheorem{theorem}{Theorem}
\begin{document}
\renewcommand{\abstractname}{Abstract}
\renewcommand{\refname}{References}
\renewcommand{\tablename}{Table.}
\renewcommand{\arraystretch}{0.9}
\thispagestyle{empty}
\sloppy

\begin{frontmatter}
\title{The girths of the cubic Pancake graphs}

\author[01,02]{Elena~V.~Konstantinova}
\ead{e\_konsta@math.nsc.ru}

\author[02]{Son En Gun}
\ead{e.son@g.nsu.ru}

\address[01]{Sobolev Institute of Mathematics, Ak. Koptyug av. 4, Novosibirsk, 630090, Russia}
\address[02]{Novosibisk State University, Pirogova str. 2, Novosibirsk, 630090, Russia}

\begin{abstract}
The Pancake graphs $P_n, n\geqslant 2$, are Cayley graphs over the symmetric group $\mathrm{Sym}_n$ generated by prefix-reversals. There are six generating sets of prefix-reversals of cardinality three which give connected Cayley graphs over the symmetric group known as cubic Pancake graphs. In this paper we study the girth of the cubic Pancake graphs. It is proved that considered cubic Pancake graphs have the girths at most twelve.
\end{abstract}

\begin{keyword} Pancake graph; cubic Pancake graph; prefix-reversal; girth

\vspace{\baselineskip}
\MSC[2010] 05C25\sep 05C38\sep 05C82\sep 68R10
\end{keyword}
\end{frontmatter}

\section{Introduction}

{\it The Pancake graph} $P_n=(\mathrm{Sym}_n,PR), n\geqslant 2$, is the Cayley graph over the symmetric group $\mathrm{Sym}_n$ of permutations $\pi=[\pi_1 \pi_2 \ldots \pi_n]$ written as strings in one-line notation, where $\pi_i=\pi(i)$ for any $1\leqslant i \leqslant n$, with the generating set $PR=\{r_i\in \mathrm{Sym}_n: 2 \leqslant i \leqslant n\}$ of all prefix--reversals $r_i$ inversing the order of any substring $[1,i], 2\leqslant i \leqslant n$, of a permutation $\pi$ when multiplied on the right, i.e. $[\pi_1 \ldots \pi_i \pi_{i+1} \ldots \pi_n] r_i=[\pi_i \ldots \pi_1 \pi_{i+1} \ldots \pi_n]$. This graph is well known because of the open combinatorial Pancake problem of finding its diameter~\cite{DW75}.

The graph $P_n$ is a connected vertex--transitive $(n-1)$-regular graph of order $n!$ with no loops and multiple edges. It is almost pancyclic~\cite{KF95,STC06} since it contains cycles of length $\ell$, $6\leqslant \ell \leqslant n!$, but doesn't contain cycles of length $3,4$ or $5$. Since the length of the shortest cycle contained in the graph is six, hence we have $g(P_n)=6$ for any $n\geqslant 3$, where $g(P_n)$ is the girth of $P_n$. The girths of the burnt Pancake graphs over the hyperoctahedral group was considered in~\cite{C11}. The (burnt) Pancake graphs are commonly used in computer science to represent interconnection networks~\cite{BS03,SW16,W13}.

Importance of fixed--degree Pancake graphs, in particular, cubic Pancake graphs as models of networks was shown in~\cite{BS03} by D.~W.~Bass and I.~H.~Sudborough. The authors have considered cubic Pancake graphs as induced subgraphs of the Pancake graph $P_n$. The necessary conditions for a set of three prefix--reversals to generate the symmetric group $\mathrm{Sym}_n$ were found. In particular, it was shown that the cubic Pancake graphs over the symmetric group $\mathrm{Sym}_n$, $n \geqslant 4$, are connected  with the following generating sets:
$$BS_1=\{r_2,r_{n-1},r_n\}; \ BS_2=\{r_{n-2},r_{n-1},r_n\}; \ BS_3=\{r_3,r_{n-2},r_n\},  n \mbox{  is even};$$
$$BS_4=\{r_3,r_{n-1},r_n\}, \mbox{ where } n \mbox{ is odd}; \ BS_5=\{r_{n-3},r_{n-1},r_n\}, n \mbox{ is odd};$$
$$BS_6=\{r_{n-3},r_{n-2},r_n\} \mbox{ for any } n \geqslant 5.$$
The set $BS_2$ is known as 'big-3' flips, and the corresponding cubic Pancake graph generated by this set is called as the Big-3 Pancake network~\cite{SW16}. J.~Sawada and A.~Williams have conjectured in~\cite{SW16} that this graph has cyclic Gray codes.

In this paper we study cubic Pancake graphs and their girths. Our main result is obtained for the cubic Pancake graphs $P_n^i=Cay(\mathrm{Sym}_n,BS_i)$ that are Cayley graphs over the symmetric group $\mathrm{Sym}_n$ generated by the sets $BS_i,\ i=1,\ldots,5$.

\begin{theorem}\label{girth-six-generating-sets}
\begin{equation} \label{BS123}
g(P_n^1, P_n^2, P_n^3)=
\begin{cases}
   6, &\text{when $n=4$;}\\
   8, &\text{when $n \geqslant 5$;}
 \end{cases}
\end{equation}

\begin{equation} \label{BS4}
g(P_n^4)=8, \ \ \  \text{when $n \geqslant 5$ is odd;}
\end{equation}

\begin{equation} \label{BS5}
g(P_n^5)=
\begin{cases}
   8, &\text{when $n=5$;}\\
  12, &\text{when $n \geqslant 7$ is odd;}
 \end{cases}
\end{equation}
\end{theorem}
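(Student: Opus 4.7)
The plan is to prove matching upper and lower bounds on the girth for each of the five generating sets. Since every $r_j$ is an involution, a cycle of length $k$ at the identity in $P_n^i$ corresponds to a non-backtracking cyclic word $s_1 s_2 \cdots s_k$ drawn from $BS_i$ (with $s_\ell \neq s_{\ell+1}$ and $s_k \neq s_1$) whose product equals the identity in $\mathrm{Sym}_n$. The proof therefore splits into producing such words at each claimed girth (upper bounds) and ruling them out for all smaller lengths (lower bounds).

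\textbf{Upper bounds.} For $BS_1$ with $n=4$, the classical identity $(r_2 r_3)^3 = e$ inherited from the Pancake graph yields the $6$-cycle. For $BS_1, BS_2, BS_3$ with $n \geq 5$, for $BS_4$ with odd $n \geq 5$, and for $BS_5$ with $n = 5$, I would exhibit explicit $8$-letter words in the three generators and verify by direct permutation multiplication that the product is $e$. Natural templates such as $r_a r_b r_c r_b r_a r_b r_c r_b$ or $(r_a r_b r_a r_c)^2$-type patterns can be chosen so that the induced permutation telescopes. The $12$-cycle for $BS_5$ with odd $n \geq 7$ is the most delicate construction: one must find a length-$12$ word in $\{r_{n-3}, r_{n-1}, r_n\}$ whose cumulative action is trivial.

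\textbf{Lower bounds, two-generator subwalks.} For each pair $\{r_a, r_b\} \subset BS_i$, the shortest closed walk alternating only between $r_a$ and $r_b$ has length $2\,\mathrm{ord}(r_a r_b)$. A key observation is that $r_n r_{n-1}$ sends $[\pi_1, \ldots, \pi_n]$ to $[\pi_2, \ldots, \pi_n, \pi_1]$, a cyclic shift of order $n$, so any $\{r_{n-1}, r_n\}$-alternating cycle has length at least $2n \geq 10$ for $n \geq 5$. Analogous direct computations yield the orders of the remaining two-generator products ($r_2 r_{n-1}$, $r_2 r_n$, $r_{n-2} r_n$, $r_3 r_n$, $r_{n-3} r_{n-1}$, and so on), and in each case the order exceeds the relevant threshold. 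Combined with a sign-parity argument, which rules out odd-length cycles in subcases where the three generators share parity behaviour, this eliminates all two-color candidates below the claimed girth.

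\textbf{Lower bounds, three-generator subwalks.} What remains is to rule out short cycles that use all three generators. For each length $k$ with $3 \leq k < g$ and each non-backtracking cyclic pattern of that length, I would evaluate the resulting permutation applied to $[1, 2, \ldots, n]$ and verify it is not the identity. Tracking only the images of positions $1, 2$ or of $n-1, n$ is usually sufficient, since each generator moves these positions in a characteristic way. The main obstacle is the $BS_5$ case for odd $n \geq 7$, where $r_{n-3}, r_{n-1}, r_n$ all have large overlapping supports and produce intricate products; here one must exhaust cyclic words of lengths $3$ through $11$, and this is where the bulk of the combinatorial work lies. I would search for an invariant — for instance the restricted action on $\{n-3, n-2, n-1, n\}$, or the value $\pi_1$ — that provably cannot return to its initial state in fewer than $12$ applications of generators from $BS_5$.
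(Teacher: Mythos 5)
Your overall strategy --- reduce to cyclically reduced words in three involutions and handle two-generator words via orders of products, three-generator words by exhaustion --- is a genuinely different route from the paper's. The paper instead observes that each $P_n^i$ is an induced subgraph of $P_n$ and leans on the published classification of all $6$-, $7$-, $8$- and $9$-cycles of $P_n$ by canonical forms (Theorems~\ref{67-cycles} and~\ref{8-cycles} and the cited $9$-cycle classification): one simply checks which canonical forms are expressible in the three available prefix-reversals. For the remaining lengths $10$ and $11$ in $P_n^5$ the paper proves two dedicated theorems using the hierarchical copy structure of $P_n$, distributing the hypothetical cycle's vertices among $(n-1)$-copies and deriving contradictions case by case. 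Your route avoids all of that machinery, which is a real advantage in self-containedness, but it relocates the entire difficulty into a brute-force search that you have not carried out.

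That is the genuine gap. First, none of the witnesses for the upper bounds are actually exhibited: ``natural templates \ldots can be chosen so that the induced permutation telescopes'' is not a verification, and the $12$-cycle for $BS_5$ (the paper's $(r_n\,r_{n-1}\,r_n\,r_{n-1}\,r_{n-3}\,r_{n-1})^2$) is explicitly deferred. Second, and more seriously, the exclusion of mixed three-generator relators of lengths $6$ through $11$ for $BS_5$ with odd $n\geqslant 7$ --- the heart of the whole theorem, occupying the paper's Theorems~\ref{10-cycles} and~\ref{11-cycles} plus the $9$-cycle citation --- is replaced by ``I would exhaust cyclic words \ldots and search for an invariant.'' No such invariant is produced, and the two candidates you name are demonstrably too weak: the value $\pi_1$ returns to $1$ after short mixed words (any closed walk does this by definition, and even proper sub-walks such as $r_{n-1}r_{n-3}r_{n-1}r_{n-3}$ applied to suitable permutations restore the first entry), and the action restricted to positions $n-3,\dots,n$ cannot be an obstruction uniformly in $n$, as witnessed by the $8$-cycle $(r_5 r_4 r_5 r_2)^2$ that exists precisely at $n=5$ and must be destroyed by growing $n$. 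Any correct invariant must therefore be $n$-sensitive, which is exactly the information the paper extracts from its copy-decomposition argument (a short path inside one $(n-1)$-copy cannot move a permutation between prescribed copies when $n$ is large). Without either executing the exhaustive check for general odd $n$ or supplying a working invariant, the lower bound $g(P_n^5)\geqslant 12$ --- and hence equation~(\ref{BS5}) --- remains unproved in your write-up. A similar, though much lighter, omission affects the length-$6$ and length-$7$ mixed words needed for equations~(\ref{BS123}) and~(\ref{BS4}).
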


For $5 \leqslant n \leqslant 19$, the computational results on the girths of the cubic Pancake graph $P_n^6=Cay(\mathrm{Sym}_n,BS_6)$ are given as follows:

\begin{center}
\begin{tabular}{|c|c|c|c|c|c|c|c|c|c|c|c|c|c|c|c|c|}
\hline $n$         & 5 & 6 & 7 & 8 & 9 & 10 & 11 & 12 & 13 & 14 & 15 & 16 & 17 & 18 & 19 \\
\hline $g(P_n^6)$  & 6 & 8 & 10 & 12 & 12 & 16 & 16 & 16 & 20 & 20 & 20 & 24 & 24 & 24 & 28 \\
\hline
\end{tabular}
\end{center}

For any $19 \leqslant n \leqslant 33$, the computational results show that $g(P_n^6)=28$. One can conjecture  that this is true for any $n \geqslant 19$.

The paper is organized as follows. The proof of Theorem~\ref{girth-six-generating-sets} is based on the characterization of small cycles in the Pancake graphs. We present preliminary results with main definitions and notation in Section~\ref{sec1}, where it is shown that for any $n\geqslant 7$ there are no $10$--cycles and $11$--cycles in $P_n^5$.  Then we prove Theorem~\ref{girth-six-generating-sets} in Section~\ref{proof}.

\section{Preliminary results}\label{sec1}

The first results on a characterization of small cycles in the Pancake graph were obtained in~\cite{KM10} where the following cycle representation via a product of generating elements was used. A sequence of prefix--reversals $C_{\ell}=r_{i_0} \ldots r_{i_{\ell-1}}$, where $2\leqslant i_j \leqslant n$, and $i_j \neq i_{j+1}$ for any $0\leqslant j \leqslant \ell-1$, such that $\pi r_{i_0}\ldots r_{i_{\ell-1}}=\pi$, where $\pi \in \mathrm{Sym}_n,$ is called {\it a form of a cycle $C_{\ell}$ of length $\ell$}. A cycle $C_{\ell}$ of length $\ell$ is also called an $\ell$--cycle, and a vertex of $P_n$ is identified with the permutation which corresponds to this vertex. It is evident that any $\ell$--cycle can be presented by $2\,\ell$ forms (not necessarily different) with respect to a vertex and a direction. {\it The canonical form $C_{\ell}$ of an $\ell$--cycle} is called a form with a lexicographically maximal sequence of indices $i_0\ldots i_{\ell-1}$.  We shortly write $C_{\ell}=(r_a r_b)^k$ for a cycle form $C_{\ell}=r_a r_b\ldots r_a r_b$, where $\ell=2\,k, \ a\neq b$, $r_a r_b$ appears exactly $k$ times and $\pi\,r_a r_b\ldots r_a r_b=\pi$ for any $\pi \in \mathrm{Sym}_n$.  The form $C_{\ell}=(r_a r_b)^k$ is \textit{canonical} if $a>b$. By using this description, the following results characterizing $6$-- and $7$--cycles were obtained.

\begin{theorem} {\rm \cite{KM10}} \label{67-cycles}
The Pancake graph $P_n, n\geqslant 3,$ has $\frac{n!}{6}$ independent $6$--cycles of the canonical form
\begin{equation} \label{C6}
C_6=(r_3\,r_2)^3
\end{equation}
and $n! (n-3)$ distinct $7$--cycles of the canonical form
\begin{equation} \label{C7}
C_7=r_k\,r_{k-1}\,r_k\,r_{k-1}\,r_{k-2}\,r_k\,r_2,
\end{equation}
 where $4\leqslant k \leqslant n$. Each of the vertices of $P_n$ belongs to exactly one $6$--cycle and $7 (n-3)$ distinct $7$--cycles.
\end{theorem}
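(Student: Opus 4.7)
The plan is to enumerate all canonical cycle forms of length $6$ and $7$ by tracking the role of the maximal index $M=\max_j i_j$ appearing in a cycle form $r_{i_0}\ldots r_{i_{\ell-1}}$. The key observation is that reversals $r_i$ with $i<M$ fix every position in $\{M+1,\ldots,n\}$, so the element in position $M$ can only be displaced by $r_M$ itself. Consequently, the occurrences of $r_M$ must alternately move the content of position $M$ to position $1$ and return some (not necessarily the same) element back; the intervening factors drawn from $\{r_2,\ldots,r_{M-1}\}$ must transport the element back to position $M$ before the next $r_M$. This severely restricts the admissible sequences.

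First I would verify by direct multiplication that the two claimed canonical forms are cycles: applying $(r_3 r_2)^3$ to the identity cycles through the six permutations of the first three coordinates and returns to the identity, while $r_k r_{k-1} r_k r_{k-1} r_{k-2} r_k r_2$ can be checked to act trivially on $[1,2,\ldots,n]$. For the converse, I would do a case split on $M$. In the $6$-cycle case, $M=2$ is impossible since $i_j\neq i_{j+1}$; $M\geqslant 4$ forces $r_M$ to appear at least twice with at most four intervening reversals, and a short enumeration shows these cannot restore both position $1$ and position $M$; this leaves $M=3$ and the form $(r_3 r_2)^3$. For the $7$-cycle case the argument is more delicate: $r_k$ must appear an even number of times if $k\geqslant 4$, so at most four times, and balancing the action on positions $1$ and $k$ against the seven-letter budget forces $r_k$ to appear exactly three times arranged as $r_k\,\cdot\,r_k\,\cdot\,r_k$ with the intervening words having lengths $1$, $1$, $1$ plus a trailing letter. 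Matching the positional constraints then pins the form to $r_k r_{k-1} r_k r_{k-1} r_{k-2} r_k r_2$.

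Once the canonical forms are classified, the counting is straightforward. Since $(r_3 r_2)^3$ uses only $r_2$ and $r_3$, which fix positions $4,\ldots,n$, any $6$-cycle is determined by the common suffix $\pi_4\ldots \pi_n$, giving $\frac{n!}{6}$ pairwise vertex-disjoint (independent) $6$-cycles, each vertex lying in exactly one. For $7$-cycles, fix a vertex $\pi$ and a parameter $k\in\{4,\ldots,n\}$: each of the $7$ cyclic rotations of the canonical form yields a distinct $7$-cycle through $\pi$ (reflections coincide with rotations here because the word is not a palindrome in the symmetric sense but produces distinct starting edges), giving $7(n-3)$ cycles through $\pi$. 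Summing over all $n!$ vertices and dividing by $7$ yields $n!(n-3)$ distinct $7$-cycles.

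The main obstacle is the case analysis in the $7$-cycle classification: handling the interaction between the three (or conceivably other count of) occurrences of $r_k$ and the short intervening words requires care, because one must rule out several near-miss combinations where the positional constraints on position $1$ are met but position $k$ is displaced, or vice versa. A clean way to organize this is to introduce, for each partial product, the pair (element at position $1$, element at position $k$) and to write down the induced transitions under $r_j$ for $j<k$ and under $r_k$; the closure condition then becomes a small finite system of equations whose only solution is the stated canonical form.
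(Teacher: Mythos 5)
First, a point of reference: the paper itself does not prove this statement --- Theorem~\ref{67-cycles} is quoted from \cite{KM10} (with the $8$--cycle analogue taken from \cite{KM14}) --- so there is no in-paper argument to compare yours against. Your overall strategy (track the maximal index $M$ in a cycle form and exploit the fact that only $r_M$ can alter the entry in position $M$) is the mechanism used in that literature, your verification of the two canonical forms is correct, and your count of the $6$--cycles (each determined by the frozen suffix $\pi_4\ldots\pi_n$, hence $n!/6$ vertex-disjoint cycles, one through each vertex) is sound.

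The pivotal step of the $7$--cycle classification, however, is broken as written. You claim that $r_k$ ``must appear an even number of times if $k\geqslant 4$'' and then conclude that it appears exactly three times; these two assertions contradict each other, and the parity claim is simply false --- the canonical form $r_k\,r_{k-1}\,r_k\,r_{k-1}\,r_{k-2}\,r_k\,r_2$ itself contains three occurrences of $r_k$. The error comes from the picture of the occurrences of $r_M$ ``alternately'' exporting and restoring the entry of position $M$: in fact every application of $r_M$ simultaneously sends the position-$M$ entry to position $1$ and the position-$1$ entry to position $M$, so the occupants of position $M$ form a cyclic chain $x_0\to x_1\to\cdots\to x_t=x_0$ that can close for odd $t$. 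The correct constraints are: $r_M$ occurs at least twice; if it occurs exactly twice, each of the two intervening words over $\{r_2,\dots,r_{M-1}\}$ must return the element sitting at position $1$ back to position $1$, and no such word of length $1$ or $2$ exists (a single $r_j$ moves position $1$ to position $j\neq 1$, and $r_j r_i$ returns it only when $i=j$, which is forbidden), so each gap costs at least three letters --- impossible within a budget of $4$ (length $6$) or $5$ (length $7$); and four occurrences leave only three letters for four nonempty gaps. That is what forces exactly three occurrences for $\ell=7$, after which the finite check still has to be carried out honestly. Finally, your parenthetical on the $7(n-3)$ count is garbled: the $14$ rotations and reflections of the form are pairwise distinct as words, and each $7$--cycle through a fixed vertex is traced by exactly two of them (one per direction), which is what gives $14/2=7$ cycles per admissible $k$; reflections do not ``coincide with rotations'' here. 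None of this is unfixable, but as stated the argument does not go through.
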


The complete characterization of $8$--cycles is given by the following theorem.

\begin{theorem} {\rm \cite{KM14}} \label{8-cycles} Each of vertices of $P_n, n\geqslant 4,$ belongs to $N=\frac{n^3+12n^2-103n+176}{2}$ distinct $8$--cycles of the following canonical forms:
\begin{equation} \label{C81}
 C^1_8=r_k\,r_j\,r_i\,r_j\,r_k\,r_{k-j+i}\,r_i\,r_{k-j+i},\hspace{10mm}  2\leqslant i < j \leqslant k-1,\,\,4\leqslant k \leqslant n
\end{equation}
%\vspace{-8mm}
\begin{equation} \label{C82}
C^2_8=r_k\,r_{k-1}\,r_2\,r_{k-1}\,r_k\,r_2\,r_3\,r_2, \hspace{46mm} 4\leqslant k \leqslant n
\end{equation}
%\vspace{-8mm}
\begin{equation} \label{C83}
C^3_8=r_k\,r_{k-i}\,r_{k-1}\,r_i\,r_k\,r_{k-i}\,r_{k-1}\,r_i,\hspace{16mm} 2\leqslant i \leqslant k-2,\,\,4\leqslant k \leqslant n
\end{equation}
%\vspace{-8mm}
\begin{equation} \label{C84}
C^4_8=r_k\,r_{k-i+1}\,r_k\,r_i\,r_k\,r_{k-i}\,r_{k-1}\,r_{i-1},\hspace{12mm} 3\leqslant i\leqslant k-2,\,\,5\leqslant k \leqslant n
\end{equation}
%\vspace{-8mm}
\begin{equation} \label{C85}
C^5_8=r_k\,r_{k-1}\,r_{i-1}\,r_k\,r_{k-i+1}\,r_{k-i}\,r_k\,r_i,\hspace{10mm} 3\leqslant i\leqslant k-2,\,\,5\leqslant k \leqslant n
\end{equation}
%\vspace{-8mm}
\begin{equation} \label{C86}
C^6_8=r_k\,r_{k-1}\,r_k\,r_{k-i}\,r_{k-i-1}\,r_k\,r_i\,r_{i+1},\hspace{10mm} 2\leqslant i\leqslant k-3,\,\,5\leqslant k \leqslant n
\end{equation}
%\vspace{-8mm}
\begin{equation}\label{C87}
C_8^7=r_k\,r_{k-j+1}\,r_k\,r_i\,r_k\,r_{k-j+1}\,r_k\,r_i,\hspace{7mm} 2\leqslant i < j\leqslant k-1,\,\,4\leqslant k \leqslant n
\end{equation}
%\vspace{-8mm}
\begin{equation}\label{C88}
C_8^8=(r_4\,r_3)^4.\hspace{84mm}
\end{equation}
\end{theorem}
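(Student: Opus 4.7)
The plan has three stages: (i) verify each listed form $C_8^1, \ldots, C_8^8$ is a closed 8-cycle, (ii) show every 8-cycle falls into one of these eight families, and (iii) count cycles at a fixed vertex to recover the formula for $N$.

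Stage (i) is direct verification: for each form I would compute the product of prefix-reversals acting on a symbolic permutation $[\pi_1 \pi_2 \ldots \pi_n]$, tracking how each entry moves, and confirm the endpoint equals $[\pi_1 \pi_2 \ldots \pi_n]$. Vertex-transitivity of $P_n$ then yields an 8-cycle through every vertex for each valid choice of parameters $i, j, k$ in the stated ranges.

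The main obstacle is stage (ii). By vertex-transitivity I may assume the 8-cycle contains the identity, and the task reduces to enumerating all sequences $r_{i_0} \ldots r_{i_7}$ with $i_j \neq i_{j+1}$ (indices taken cyclically) and $r_{i_0}\,r_{i_1} \cdots r_{i_7} = e$. Set $k = \max_j i_j$, which by the canonical-form convention may be placed at $i_0$, and let $m_k$ be the multiplicity of $r_k$ in the sequence. A preliminary observation is that $m_k \geq 2$: indices strictly larger than $k$ do not appear, and no generator $r_i$ with $i < k$ moves the value currently in position $k$, so a single occurrence of $r_k$ cannot be undone. Hence $m_k \in \{2, 3, 4\}$, and I would case-split accordingly. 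When $m_k = 2$, the cycle has shape $r_k\, u\, r_k\, v$ with words $u, v$ of total length six, and the identity $r_k u r_k v = e$ forces $v = (r_k u r_k)^{-1}$; expanding the conjugation $r_k u r_k$ through its action on positions gives index relations yielding $C_8^1$, $C_8^2$, and $C_8^3$. When $m_k = 3$, the three occurrences of $r_k$ partition the remaining five positions into sub-segments whose combined action must invert $r_k$; enumerating their lengths and the admissible index patterns produces $C_8^4$, $C_8^5$, and $C_8^6$. When $m_k = 4$, index-alternation severely constrains the other four positions, giving $C_8^7$ for general $k$ and the exceptional $C_8^8 = (r_4 r_3)^4$ that requires $k = 4$. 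Throughout, the 6-cycle and 7-cycle characterizations of Theorem~\ref{67-cycles} are useful to rule out sub-patterns that would collapse the cycle length below eight.

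For stage (iii), I would count parameter tuples for each family in its specified range: $C_8^1$ and $C_8^7$ contribute $O(n^3)$ tuples from $2 \leq i < j \leq k-1 \leq n-1$; $C_8^3$, $C_8^4$, $C_8^5$, $C_8^6$ each contribute $O(n^2)$ from a single parameter $i$ together with $k$; $C_8^2$ contributes $n-3$ (one per $k$); and $C_8^8$ contributes exactly one. Because a canonical form is defined by lexicographic maximality of the index sequence, each 8-cycle possesses a unique canonical starting position and direction, with the number of canonical starts coinciding with the order of the cyclic symmetry of the index word (trivial for most forms, but of order two for $C_8^7$ and order four for $C_8^8$). Converting parameter-counts into per-vertex incidence counts via these symmetry factors, summing over all eight families, and simplifying algebraically should collapse to the cubic $\tfrac{n^3 + 12n^2 - 103n + 176}{2}$; this last polynomial identity is the routine but bookkeeping-heavy capstone of the argument.
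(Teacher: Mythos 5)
First, a point of comparison: this theorem is not proved in the paper at all --- it is imported verbatim from \cite{KM14}, so there is no internal proof to measure your attempt against; what follows judges the proposal on its own terms. Your stages (i)--(ii) form a sound skeleton. The observation that the maximal index $k$ must occur at least twice is correct (position $k$ is touched only by $r_k$), the split $m_k\in\{2,3,4\}$ is forced by the non-repetition condition, and it does match the actual grouping: $C_8^1,C_8^2,C_8^3$ contain $r_k$ twice, $C_8^4,C_8^5,C_8^6$ three times, and $C_8^7,C_8^8$ four times. Your conjugation viewpoint $r_k u r_k v=e$ is essentially the hierarchical-structure argument of \cite{KM10,KM14} (echoed in this paper's own Theorems~\ref{10-cycles} and~\ref{11-cycles}): occurrences of $r_k$ are exactly the external edges between copies of $P_{k-1}$, and the lemma that two vertices at distance at most two in one copy have external neighbours in distinct copies is what forces $|u|=|v|=3$ when $m_k=2$. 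Of course stage (ii) is only a plan --- the entire content of the theorem is the enumeration you defer --- but the plan is not wrong.

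The genuine gap is stage (iii): the symmetry bookkeeping you propose is incorrect, and following it would provably not reproduce $N$. First, the dihedral symmetries are not ``trivial for most forms'': the index word of $C_8^1$, namely $(k,j,i,j,k,a,i,a)$ with $a=k-j+i$, is carried to itself by reversal followed by a rotation, and the same holds for $C_8^2$; the word of $C_8^3$ is invariant under rotation by four; and $(r_4\,r_3)^4$ has dihedral stabilizer of order eight, not cyclic symmetry of order four. Second --- and your scheme has no way to detect this --- distinct admissible tuples within one family can describe the same canonical form: in $C_8^1$ the tuples $(i,j,k)$ and $(i,\,k-j+i,\,k)$ give words differing by a rotation by four, and in $C_8^7$ the tuples $(i,j,k)$ and $(k-j+1,\,k-i+1,\,k)$ likewise coincide, so the raw tuple counts in exactly the two $O(n^3)$ families overcount canonical forms by a factor of roughly two. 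The arithmetic shows this is fatal: a form whose word has dihedral stabilizer of order $s$ passes $8/s$ distinct $8$--cycles through each fixed vertex, so the correct cubic contribution is $2\cdot\frac{n^3}{6}$ from $C_8^1$ plus $1\cdot\frac{n^3}{6}$ from $C_8^7$, i.e.\ $\frac{n^3}{2}$, matching $N$; with your factors one gets on the order of $8\cdot\frac{n^3}{6}+4\cdot\frac{n^3}{6}=2n^3$, off by a factor of four. Boundary coincidences also matter for the lower-order terms: at $n=4$ the five admissible forms contribute $2+4+2+1+1=10=N(4)$ cycles through each vertex precisely because $k=4$ creates extra symmetries in $C_8^1$, $C_8^3$ and $C_8^7$. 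Without identifying the reflection symmetries, the intra-family parameter identifications, and these degenerate cases, the ``routine algebra'' you invoke cannot collapse to $\frac{n^3+12n^2-103n+176}{2}$.
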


The complete characterization of $9$--cycles in the Pancake graphs were obtained in~\cite{KM11}.

In general, the complete characterization of small cycles in the Pancake graphs presented in~\cite{KM10,KM11} is based on the hierarchical structure of the Pancake graphs. The graph \mbox{$P_n, n\geqslant 4,$} is constructed from $n$ copies of $P_{n-1}(i), 1\leqslant i \leqslant n,$ such that each $P_{n-1}(i)$ has the vertex set $$V_i=\lbrace[\pi_1\dots\pi_{n-1}i],$$ where $\pi_k\in\lbrace 1,\dots,n\rbrace \setminus \lbrace i \rbrace : 1\leqslant k \leqslant n-1\rbrace,$ $|V_i| =(n-1)!,$ and the edge set $$E_i=\lbrace\lbrace[\pi_1\dots\pi_{n-1}i],[\pi_1\dots\pi_{n-1}i]r_j\rbrace:2\leqslant j\leqslant n-1\rbrace,$$ where $|E_i|=\frac{(n-1)!(n-2)}{2}.$ Any two copies $P_{n-1}(i)$ and $P_{n-1}(j), i \neq j,$ are connected by $(n-2)!$ edges presented as $\lbrace[i\pi_2\dots\pi_{n-1}j],[j\pi_(n-1\dots\pi_{2}i]\rbrace,$ where $[i\pi_2\dots\pi_{n-1}j]r_n=[j\pi_(n-1\dots\pi_{2}i].$ Prefix-reversals $r_j,$ $2\leqslant j \leqslant n-1,$ defines \textit{internal edges} in $P_{n-1}(i), 1\leqslant i \leqslant n$, and the prefix-reversal $r_n$ defines \textit{external edges} between copies. Copies $P_{n-1}(i)$, or just $P_{n-1}$ when it is not important to specify the last element of permutations belonging to the copy, are also called $(n-1)-$copies.

The hierarchical structure of the Pancake graphs is used to prove the following two results.

\begin{theorem} \label{10-cycles} In the cubic Pancake graphs $P_n^5=Cay(\mathrm{Sym}_n,\{r_{n-3},r_{n-1},r_n\})$, $n\geqslant 7,$ there are no cycles of length $10$. For $n=5$ there are $10$-cycles of the canonical form $C_{10}=(r_5\,r_4)^5$.
\end{theorem}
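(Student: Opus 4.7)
The plan is to handle the two cases of the statement separately. For $n=5$, I verify directly in $\mathrm{Sym}_5$ that $(r_5r_4)^5$ equals the identity: starting from any permutation and alternately applying $r_5$ and $r_4$ ten times visits ten distinct permutations and returns to the starting one, which exhibits the claimed $10$-cycle of canonical form through each vertex of $P_5^5$.

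For $n\geq 7$, I would exploit the hierarchical decomposition of $P_n$ into copies $P_{n-1}(i)$ joined by $r_n$-edges. Setting $a=r_{n-3}$, $b=r_{n-1}$, $c=r_n$, any hypothetical $10$-cycle is a closed walk $r_{i_0}r_{i_1}\cdots r_{i_9}$ in $\{a,b,c\}$ with $i_j\neq i_{j+1}$ cyclically. A short direct calculation shows that $ab$ acts on $\{1,\dots,n\}$ by $i\mapsto i+2$ for $1\leq i\leq n-3$, together with $n-2\mapsto 2$, $n-1\mapsto 1$, $n\mapsto n$; for odd $n\geq 5$ this is a single $(n-1)$-cycle on $\{1,\dots,n-1\}$, so $\mathrm{ord}(ab)=n-1\geq 6$. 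Hence the $\{a,b\}$-subgraph of each copy is a disjoint union of cycles of length $2(n-1)\geq 12$, with labels $a$ and $b$ alternating around each such cycle; the prohibition $i_j\neq i_{j+1}$ then forces any maximal $\{a,b\}$-sub-walk to proceed monotonically along its internal cycle in one direction.

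Let $k_c$ be the number of $c$-steps in the cycle; since two $c$-steps cannot be adjacent, $k_c\leq 5$. If $k_c=0$, the cycle lies in a single copy, but by monotonicity a closed internal walk has length $2(n-1)\geq 12$, contradiction. If $k_c=1$, the starting and ending copies differ, so the walk cannot close. For $k_c\in\{2,3,4,5\}$, the cycle decomposes into $k_c$ internal subpaths of positive lengths $\ell_1,\dots,\ell_{k_c}$ with $\sum_i\ell_i=10-k_c$, separated by $c$-edges; each subpath shifts its current vertex by $\ell_i$ steps along an internal cycle of length $2(n-1)$, while each $c$-edge sends a permutation to its reverse, swapping first and last entries. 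For each ordered partition and each choice of starting generator on each subpath, I track the last entry (which changes only under $c$ and determines the current copy) and the first entry (which determines the copy entered after the next $c$), and show that the combined requirements---returning to the starting copy after $k_c$ external steps and to the starting permutation after all $10$ steps---cannot be simultaneously met for $n\geq 7$.

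The main obstacle is the $k_c\in\{2,3,4,5\}$ analysis: each value of $k_c$ branches into several ordered partitions of $10-k_c$, and each subpath can start with either $a$ or $b$, yielding many sub-configurations. The heart of the argument is therefore to find uniform-in-$n$ obstructions that rule out large blocks of configurations at once, leaving only a small residue for direct verification; natural candidates are the divisibilities imposed by $\mathrm{ord}(ab)=n-1$ and $\mathrm{ord}(bc)=n$ (computed from $r_{n-1}r_n:i\mapsto i+1 \pmod n$), together with the orbit structure of $ac=r_n r_{n-3}$ on $\{1,\dots,n\}$, none of which admits a return of length compatible with $10$ when $n\geq 7$.
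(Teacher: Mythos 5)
Your $n=5$ verification and your reductions for $k_c=0$ and $k_c=1$ are sound, and the observation that $\mathrm{ord}(r_{n-3}r_{n-1})=n-1$ (so the $\{r_{n-3},r_{n-1}\}$-subgraph of each copy splits into alternating cycles of length $2(n-1)\geqslant 12$) is a clean way to dispose of the single-copy case. Note also that your parametrization by the number $k_c$ of $r_n$-edges is essentially the same hierarchical decomposition the paper uses: its cases $(4+6)$, $(2+3+5)$, $(2+2+2+2+2)$, etc.\ are exactly the ordered partitions $\ell_1+\dots+\ell_{k_c}=10-k_c$ for $k_c=2,\dots,5$, with $\ell_i+1$ vertices per copy.

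The genuine gap is that for $k_c\in\{2,3,4,5\}$ you have stated a plan, not a proof. You propose to find uniform-in-$n$ obstructions from the orders of $ab$, $bc$ and $ac$, but those orders only control words that are powers of a single two-letter product; a general $10$-cycle form such as $r_n\,r_{n-3}\,r_{n-1}\,r_{n-3}\,r_n\,\dots$ mixes all three generators, and whether it closes depends on how $r_n$ conjugates an alternating $\{r_{n-3},r_{n-1}\}$-path, which is not determined by those orders alone. The paper resolves this by explicitly computing the permutations along every admissible internal path in every block configuration, first comparing last symbols to pin down copies and then, where the copies coincide, comparing full permutations. Crucially, the last-symbol argument only yields a contradiction uniformly for $n$ sufficiently large (the threshold is $n\geqslant 9$, $11$ or $15$ depending on the configuration); for $n=5,7,9,11,13$ the relevant symbols ($3$, $n-3$, $n-1$, \dots) can coincide, the candidate endpoints do land in a common copy, and one must compute the explicit length-$\ell_i$ paths and check that they miss the target. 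That is precisely the ``small residue'' you defer, and it constitutes most of the work. Until those verifications (or a genuinely uniform obstruction that covers them) are supplied, the $n\geqslant 7$ half of the theorem is not established.
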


\begin{proof} Since the cubic Pancake graphs $P_n^{5}, n\geqslant 5,$ are induced subgraphs of the Pancake graph $P_n$, then let us consider all possible cases for forming $10$--cycles in the Pancake graphs with taking into account that the generating set of $P_n^{5}$ contains only three elements $r_{n-3},r_{n-1},r_n$, where $n$ is odd. If $n=5$ then there are cycles of length $10$ of the canonical form $C_{10}=(r_5\,r_4)^5$ (see~Lemma~1 in~\cite{KM16}). If $n\geqslant 7$ then due to the hierarchical structure of the Pancake graph $P_n$, cycles of length $10$ could be formed from paths of length $l, \ 2\leqslant l\leqslant 8$, belonging to different $(n-1)-$copies of $P_n$.

Further, we consider all possible options for the distribution of vertices by copies. Without loss of generality we always put $\tau^1=I_n=[1\,2\,3\,\dots\,n-1\,n]$.

\vspace{5mm}
{\bf Case 1: 10-cycle within $P_n$ has vertices from two copies of $P_{n-1}$}
\vspace{5mm}

Suppose that a sought $10$--cycle is formed on  vertices from copies $P_{n-1}(i)$ and $P_{n-1}(j)$, where $1\leqslant i\neq j\leqslant n$. It was shown in~\cite[Lemma 2]{KM10} that if two vertices $\pi$ and $\tau,$ belonging to the same $(n-1)-$copy, are at the distance at most two, then their external neighbours $\overline{\pi}$ and $\overline{\tau}$ should belong to distinct $(n-1)-$copies. Hence, a sought cycle cannot occur in situations when its two (three) vertices belong to one copy and eight (seven) vertices belong to another one. Therefore, such a cycle must have at least four vertices in each of the two copies. Hence, there are two following cases.

\vspace{3mm}

{\bf \underline{Case $(4+6).$}} Suppose that four vertices $\pi^1,\pi^2,\pi^3,\pi^4$ of a sought $10$--cycle belong to one copy, and other six vertices $\tau^1,\tau^2,\tau^3,\tau^4,\tau^5,\tau^6$ belong to another copy. Let $\pi^1=\tau^1 r_n$ and $\pi^4=\tau^6 r_n$. Since $\tau^1=I_n$ then $\pi^1$ and $\pi^4$ should belong to $P_{n-1}(1)$. Herewith, the four vertices of $P_{n-1}(1)$ should form a path of length three whose endpoints should be adjacent to vertices from $P_{n-1}(n)$.

Consider all options for passing from $\tau^1$ to $\tau^6$ by internal edges in a copy $P_{n-1}(n)$. Since the generating set of $P_n^5$ consists of the elements $r_{n-1}$ and $r_{n-3}$ corresponding to internal edges then there are two ways to get paths of length five from $\tau^1$ to $\tau^6$ (see Figure~1).

The first path is presented as follows:
\begin{equation}\label{tau1_to_tau6_1}
\tau^1\,(r_{n-3}\,r_{n-1})^2\,r_{n-3}=\tau^6
\end{equation}
such that:
$$\tau^1=[1\,2\,3\,\dots\,n-1\,n]\xrightarrow{r_{n-3}}[n-3\,n-4\,\dots\,2\,1\,n-2\,n-1\,n]\xrightarrow{r_{n-1}}$$ $$[n-1\,n-2\,1\,2\,\dots\,n-4\,n-3\,n]  \xrightarrow{r_{n-3}}[n-5\,n-6\,\dots\,2\,1\,n-2\,n-1\,n-4\,n-3\,n]\xrightarrow{r_{n-1}}$$ $$[n-3\,n-4\,n-1\,n-2\,1\,2\,\dots\,n-6\,n-5\,n]\xrightarrow{r_{n-3}}$$
$$[n-7\,n-8\,\dots\,2\,1\,n-2\,n-1\,n-4\,n-3\,n-6\,n-5\,n]=\tau^6.$$

Since $\pi^4=\tau^6\,r_n$ then we have:

\begin{equation}\label{pi4_1}
\pi^4=[n\,n-5\,n-6\,n-3\,n-4\,n-1\,n-2\,1\,2\,\dots\,n-8\,n-7].
\end{equation}

\vspace{5mm}

\begin{figure}[ht]\centering
\includegraphics[scale=0.6]{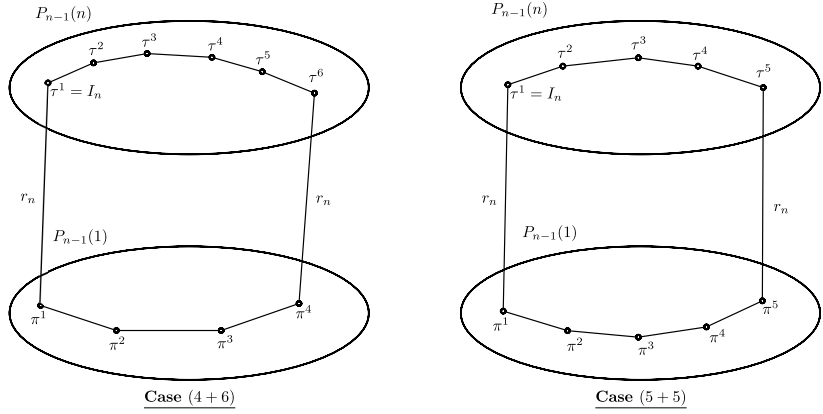}
\caption{Case 1: 10-cycle within $P_n$ has vertices from two copies of $P_{n-1}$}\label{2copy}
\end{figure}

Note that $\pi^1=\tau^1\,r_n=[n\,n-1\,\dots\,2\,1]$ with $\pi_n^1=1$ for any $n\geqslant 5$. Hence, we immediately can conclude that $\pi^4$ given by~(\ref{pi4_1}) and $\pi^1$ belong to different copies of $P_n$ since $\pi_n^4\neq 1$ for any odd $n\geqslant 7$. For $n=5$ we get $\pi^4=[5\,3\,4\,2\,1]$ and there is no path of length three between $\pi^4$ and $\pi^1$, presented as $r_{n-3}\,r_{n-1}\,r_{n-3}$ or $r_{n-1}\,r_{n-3}\,r_{n-1}$, since $\pi^4\,r_2\,r_4\,r_2=[4\,2\,5\,3\,1]$ and $\pi^4\,r_4\,r_2\,r_4=[5\,3\,2\,4\,1]$. This gives a contradiction with an assumption that $\pi^1$ and $\pi^4$ belong to the same copy $P_{n-1}(1)$. Thus, a sought cycle cannot occur neither in $P_n$ nor in $P_n^5$.

The second path is presented as follows:
\begin{equation}\label{tau1_to_tau6_2}
\tau^1\,(r_{n-1}\,r_{n-3})^2\,r_{n-1}=\tau^6
\end{equation}
such that
$$\tau^1=[1\,2\,3\,\dots\,n-1\,n]\xrightarrow{r_{n-1}}[n-1\,n-2\,\dots\,2\,1\,n]\xrightarrow{r_{n-3}}[3\,4\,\dots\,n-1\,2\,1\,n]  \xrightarrow{r_{n-1}}$$
$$[1\,2\,n-1\,n-2\,\dots\,4\,3\,n]\xrightarrow{r_{n-3}}[5\,6\,\dots\,n-2\,n-1\,2\,1\,4\,3\,n]\xrightarrow{r_{n-1}}$$
$$[3\,4\,1\,2\,n-1\,n-2\,\dots\,6\,5\,n]=\tau^6,$$
and hence
$$\pi^4=\tau^6\,r_n=[n\,5\,6\,\dots\,n-2\,n-1\,2\,1\,4\,3],$$
which means that this permutation belongs to $P_{n-1}(3)$. However, $\pi^1$ belongs to $P_{n-1}(1)$ which gives a contradiction again. Thus, a sought $10$--cycle cannot occur in this case.

{\bf \underline{Case $(5+5).$}} Suppose that five vertices $\pi^1,\pi^2,\pi^3,\pi^4, \pi^5$ of a sought $10$--cycle belong to a copy $P_{n-1}(1)$, and other five vertices $\tau^1,\tau^2,\tau^3,\tau^4,\tau^5$ belong to a copy $P_{n-1}(n)$, where $\tau^1=I_n$, $\pi^1=\tau^1 r_n$, and $\pi^5=\tau^5 r_n$. Then the five vertices of $P_{n-1}(1)$ should form a path of length four whose endpoints should be adjacent to the vertices from $P_{n-1}(n)$ (see Figure~1).

There are two ways to get paths of length four from $\tau^1$ to $\tau^5$.

The first way is given as follows:
$$\tau^1\,(r_{n-3}\,r_{n-1})^2=\tau^5,$$
more precisely, we have: $$\tau^1=[1\,2\,3\,\dots\,n-1\,n]\xrightarrow{r_{n-3}}[n-3\,n-4\,\dots\,2\,1\,n-2\,n-1\,n]\xrightarrow{r_{n-1}}$$
$$[n-1\,n-2\,1\,2\,\dots\,n-4\,n-3\,n]\xrightarrow{r_{n-3}}[n-5\,n-6\,\dots\,2\,1\,n-2\,n-1\,n-4\,n-3\,n]\xrightarrow{r_{n-1}}$$
$$[n-3\,n-4\,n-1\,n-2\,1\,2\,\dots\,n-6\,n-5\,n]=\tau^5.$$

The second way is presented as follows:
$$\tau^1\,(r_{n-1}\,r_{n-3})^2=\tau^5,$$
and we have:
$$\tau^1=[1\,2\,3\,\dots\,n-1\,n]\xrightarrow{r_{n-1}}[n-1\,n-2\,\dots\,2\,1\,n]\xrightarrow{r_{n-3}}[3\,4\,\dots\,n-1\,2\,1\,n]\xrightarrow{r_{n-1}}$$
$$[1\,2\,n-1\,n-2\,\dots\,4\,3\,n]\xrightarrow{r_{n-3}}[5\,6\,\dots\,n-2\,n-1\,2\,1\,4\,3\,n]=\tau^5.$$

Since $\pi^5=\tau^5\,r_n$ then we have either:
$$\pi^5=[n\,n-5\,n-6\,\dots\,2\,1\,n-2\,n-1\,n-4\,n-3]$$
or
$$\pi^5=[n\,3\,4\,\,1\,2\,n-1\,n-2\,\dots\,6\,5],$$
which gives a contradiction with an assumption that $\pi^1$ and $\pi^5$ belong to the copy $P_{n-1}(1)$ for any odd $n\geqslant 5$. Thus, in this case a sought cycle cannot occur in $P_n$, and hence in $P_n^5$.

\vspace{5mm}
{\bf Case 2: 10-cycle within $P_n$ has vertices from three copies of $P_{n-1}$}
\vspace{5mm}

There are four possible situations in this case.

{\bf \underline{Case $(2+2+6).$}} Suppose that two vertices $\pi^1,\pi^2$ of a sought $10$--cycle belong to one copy, other two vertices $\tau^1,\tau^2$ belong to another copy, and remaining six vertices $\gamma^1,\gamma^2, \gamma^3, \gamma^4, \gamma^5, \gamma^6$ belong to the third copy. Let $\pi^1=\tau^2 r_n$, $\pi^2=\gamma^6 r_n$, and $\tau^1=\gamma^1 r_n=I_n$, then $\gamma^1$ and $\gamma^6$ should belong to $P_{n-1}(1)$ (see Figure~2).

It is evident that there are two ways to get paths of length one from $\tau^1$ to $\tau^2$:
$$\tau^1=[1\,2\,3\,\dots\,n-1\,n]\xrightarrow{r_{n-3}}[n-3\,n-4\,\dots\,2\,1\,n-2\,n-1\,n]=\tau^2$$
or
$$\tau^1=[1\,2\,3\,\dots\,n-1\,n]\xrightarrow{r_{n-1}}[n-1\,n-2\,\dots\,2\,1\,n]=\tau^2.$$

Since $\pi^1=\tau^2\,r_n$, we have either
$$\pi^1=[n\,n-1\,n-2\,1\,2\,\dots\,n-4\,n-3]$$
or
$$\pi^1=[n\,1\,2\,\dots\,n-2\,n-1].$$

Similar, there are two ways to get a path of length one from $\pi^1$ to $\pi^2$ such that the first way is given by $\pi^1 r_{n-3}=\pi^2$, where we have either
$$\pi^2=[n-6\,n-7\,\dots\,2\,1\,n-2\,n-1\,n\,n-5\,n-4\,n-3]$$
or
$$\pi^2=[n-4\,n-5\,\dots\,2\,1\,n\,n-3\,n-2\,n-1],$$
and the second way is given by $\pi^1 r_{n-1}=\pi^2$, where we have either
$$\pi^2=[n-4\,n-5\,\dots\,2\,1\,n-2\,n-1\,n\,n-3]$$
or
$$\pi^2=[n-2\,n-3\,\dots\,2\,1\,n\,n-1],$$

\begin{figure}[ht]\centering
\includegraphics[scale=0.6]{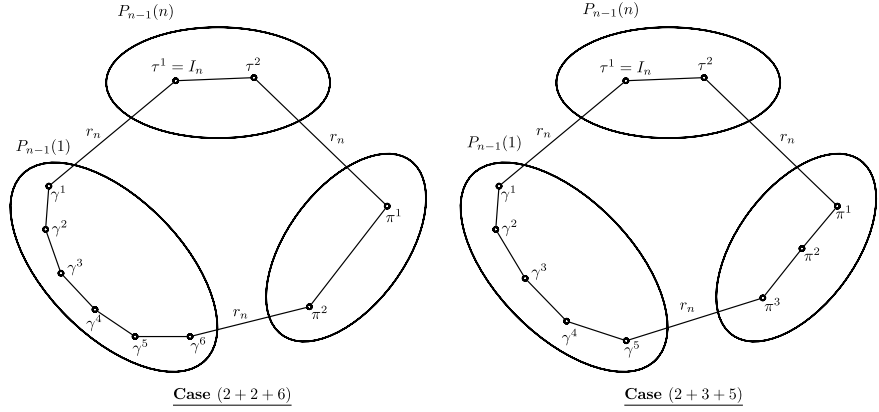}
\caption{Case 2: 10-cycle within $P_n$ has vertices from three copies of $P_{n-1}$}\label{3copy(226+235)}
\end{figure}

and since $\gamma^6=\pi^2\,r_n$ we get:
$$\gamma^6 =
  \begin{cases}
     [n-3\,n-4\,n-5\,n\,n-1\,n-2\,1\,2\,\dots\,n-7\,n-6]=\gamma^6(A)&or\\
     [n-1\,n-2\,n-3\,n\,1\,2\,\ldots\,n-5\,n-4]=\gamma^6(B)&or\\
     [n-3\,n\,n-1\,n-2\,1\,2\,\dots\,n-5\,n-4]=\gamma^6(C)&or\\
     [n-1\,n\,1\,2\,\dots\,n-3\,n-2]=\gamma^6(D).\\
  \end{cases}$$

To get a sought $10$-cycle there should be a path of length five between $\gamma^6$ and $\gamma^1$, where $\gamma^1=\tau^1\,r_n=[n\,n-1\,\dots\,2\,1]$. Let us check this. If $n=5$ the vertices $\gamma^6(B)$, $\gamma^6(C)$ and $\gamma^1=[5\,4\,3\,2\,1]$ belong to the copy $P_{n-1}(1)$, and there are two ways to get a path of length five from $\gamma^6$ and $\gamma^1$. Namely, applying $(r_2\,r_4)^2r_2$ to $\gamma^6$ we have either:
$$\gamma^6(B)=[4\,3\,2\,5\,1]\xrightarrow{r_2}[3\,4\,2\,5\,1]\xrightarrow{r_4}[5\,2\,4\,3\,1]\xrightarrow{r_2}[2\,5\,4\,3\,1]\xrightarrow{r_4}$$
$$[3\,4\,5\,2\,1]\xrightarrow{r_2}[4\,3\,5\,2\,1]\neq \gamma^1$$
or
$$\gamma^6(C)=[2\,5\,4\,3\,1]\xrightarrow{r_2}[5\,2\,4\,3\,1]\xrightarrow{r_4}[3\,4\,2\,5\,1]\xrightarrow{r_2}[4\,3\,2\,5\,1]\xrightarrow{r_4}$$
$$[5\,2\,3\,4\,1]\xrightarrow{r_2}[2\,5\,3\,4\,1]\neq \gamma^1,$$
and applying $(r_4r_2)^2r_{4}$ to $\gamma^6$ we have either:
$$\gamma^6(B)=[4\,3\,2\,5\,1]\xrightarrow{r_4}[5\,2\,3\,4\,1]\xrightarrow{r_2}[2\,5\,3\,4\,1]\xrightarrow{r_4}[4\,3\,5\,2\,1]\xrightarrow{r_2}$$
$$[3\,4\,5\,2\,1]\xrightarrow{r_4}[2\,5\,4\,3\,1]\neq \gamma^1$$
or
$$\gamma^6(C)=[2\,5\,4\,3\,1]\xrightarrow{r_4}[3\,4\,5\,2\,1]\xrightarrow{r_2}[4\,3\,5\,2\,1]\xrightarrow{r_4}[2\,5\,3\,4\,1]\xrightarrow{r_2}$$
$$[5\,2\,3\,4\,1]\xrightarrow{r_4}[4\,3\,2\,5\,1]\neq \gamma^1.$$
Hence, a path of length five does not occur between $\gamma^6$ and $\gamma^1$.

If $n=7$ the vertices $\gamma^6(A)$ and $\gamma^1=[7\,6\,5\,4\,3\,2\,1]$ belong to the copy $P_{n-1}(1)$, and the following two cases are possible:
$$\gamma^6(A)(r_{4}\,r_{6})^2r_{4}=[4\,3\,2\,7\,6\,5\,1](r_4\,r_6)^2r_4=[6\,5\,2\,7\,4\,3\,1]\neq \gamma^1$$
or
$$\gamma^6(A)(r_{6}\,r_{4})^2r_{6}=[4\,3\,2\,7\,6\,5\,1](r_6\,r_4)^2r_6=[2\,7\,4\,3\,5\,6\,1]\neq \gamma^1.$$
Again, a path of length five does not occur between $\gamma^6$ and $\gamma^1$.

If $n\geqslant 9$, there is a contradiction with an assumption that $\gamma^1$ and $\gamma^6$ belong to the copy $P_{n-1}(1)$. Thus, a sought cycle cannot occur in this case.

{\bf \underline{Case $(2+3+5).$}} Suppose that two vertices $\tau^1,\tau^2$ of a sought $10$--cycle belong to one copy, other three vertices $\pi^1,\pi^2,\pi^3$ belong to another copy, and remaining five vertices $\gamma^1,\gamma^2, \gamma^3, \gamma^4, \gamma^5$ belong to the third copy. Let $\pi^1=\tau^2 r_n$, $\pi^3=\gamma^5 r_n$, and $\tau^1=\gamma^1 r_n=I_n$, then $\gamma^1$ and $\gamma^5$ should belong to $P_{n-1}(1)$
(see Figure~2). There are two ways to get a path of length two from $\tau^1=I_n$ to $\pi^1$ such that either
$$\tau^1 r_{n-3} r_n =[n\,n-1\,n-2\,1\,2\,\dots\,n-4\,n-3] = \pi^1$$
or
$$\tau^1 r_{n-1} r_n=[n\,1\,2\,\dots\,n-2\,n-1] = \pi^1.$$

Similar, there are two ways to get a path of length two from $\pi^1$ to $\pi^3$. The first one is presented as follows:
\begin{equation}\label{pi1_to_pi3_1}
\pi^1 r_{n-3}r_{n-1}=\pi^3,
\end{equation}
such that either
$$\pi^1=[n\,n-1\,n-2\,1\,2\,\dots\,n-7\,n-6\,n-5\,n-4\,n-3]\xrightarrow{r_{n-3}}$$ \vspace{-5mm}
$$[n-6\,n-7\,\dots\,2\,1\,n-2\,n-1\,n\,n-5\,n-4\,n-3]\xrightarrow{r_{n-1}}$$
$$[n-4\,n-5\,n\,n-1\,n-2\,1\,2\,\dots\,n-7\,n-6\,n-3]=\pi^3$$
or
$$\pi^1=[n\,1\,2\,\dots\,n-2\,n-1]\xrightarrow{r_{n-3}}[n-4\,n-5\,\dots\,2\,1\,n\,n-3\,n-2\,n-1]\xrightarrow{r_{n-1}}$$
$$[n-2\,n-3\,n\,1\,2\,3\,\dots\,n-5\,n-4\,n-1]=\pi^3.$$
The second way is presented as follows:
\begin{equation}\label{pi1_to_pi3_2}
\pi^1 r_{n-1}r_{n-3}=\pi^3,
\end{equation}
such that either
$$\pi^1=[n\,n-1\,n-2\,1\,2\,\dots\,n-4\,n-3]\xrightarrow{r_{n-1}}[n-4\,n-5\,\dots\,2\,1\,n-2\,n-1\,n\,n-3]\xrightarrow{r_{n-3}}$$
$$[n-2\,1\,2\,3\,\dots\,n-4\,n-1\,n\,n-3]=\pi^3$$
or
$$\pi^1=[n\,1\,2\,\dots\,n-2\,n-1]\xrightarrow{r_{n-1}}[n-2\,n-3\,\dots\,2\,1\,n\,n-1]\xrightarrow{r_{n-3}}$$ $$[2\,3\,\dots\,n-2\,1\,n\,n-1]=\pi^3.$$

Since $\pi^3=\gamma^5\,r_n$, then by~(\ref{pi1_to_pi3_1}) and~(\ref{pi1_to_pi3_2}) we obtain:
$$\gamma^5 =
  \begin{cases}
     [n-3\,n-6\,n-7\,\dots2\,1\,n-2\,n-1\,n\,n-5\,n-4]=\gamma^5(A)&or\\
     [n-1\,n-4\,\dots\,3\,2\,1\,n\,n-3\,n-2]=\gamma^5(B)&or\\
     [n-3\,n\,n-1\,n-4\,\dots\,3\,2\,1\,n-2]=\gamma^5(C)&or\\
     [n-1\,n\,\,1\,n-2\,\dots\,3\,2]=\gamma^5(D).\\
  \end{cases}$$

To get a sought $10$-cycle there should be a path of length four between $\gamma^5$ and $\gamma^1$, where $\gamma^1=\tau^1\,r_n=[n\,n-1\,\dots\,2\,1]$. Let us check this. If $n=5$ the vertices $\gamma^5(A)=I_n\,r_{n-3}\,r_n\,r_{n-3}\,r_{n-1}\,r_n=[2\,4\,5\,3\,1]$ and $\gamma^1=[5\,4\,3\,2\,1]$ belong to the copy $P_{n-1}(1)$, and the following cases are possible:
$$\gamma^5(A)(r_{n-3}\,r_{n-1})^2=[2\,4\,5\,3\,1](r_2\,r_4)^2=[4\,2\,3\,5\,1]\neq \gamma^1$$
or
$$\gamma^5(A)(r_{n-1}\,r_{n-3})^2=[2\,4\,5\,3\,1](r_4\,r_2)^2=[4\,2\,3\,5\,1]\neq \gamma^1.$$
Hence, a path of length four does not occur between $\gamma^5$ and $\gamma^1$. However, let us note that in this case we have a cycle of length eight given by the canonical form $C_8=(r_4\,r_2)^4$ obtained from~(\ref{C87}) by putting $k=4,\,j=3,\,i=2$.

If $n\geqslant 7$, there is a contradiction with an assumption that $\gamma^1$ and $\gamma^6$ belong to the copy $P_{n-1}(1)$. Thus, a sought cycle cannot occur in this case.

{\bf \underline{Case $(2+4+4).$}} Suppose that two vertices $\tau^1,\tau^2$ of a sought $10$--cycle belong to one copy, other four vertices $\pi^1,\pi^2,\pi^3,\pi^4$ belong to another copy, and remaining four vertices $\gamma^1,\gamma^2, \gamma^3, \gamma^4$ belong to the third copy (see Figure~3). Let $\pi^1=\tau^1 r_n=[n\,n-1\,n-2\,\dots\,2\,1]$, $\pi^4=\gamma^4 r_n$, and $\tau^2=\gamma^1 r_n$. Then both $\gamma^1$ and $\gamma^4$ should belong to either $P_{n-1}(n-1)$ or $P_{n-1}(n-3)$, since either $\gamma^1=\tau^1 r_{n-1}r_n$ or $\gamma^1=\tau^1 r_{n-3}r_n$. More precisely, we have:
\begin{equation}\label{gamma1AB}
\gamma^1=
  \begin{cases}
     [n\,1\,2\,\dots\,n-2\,n-1]=\gamma^1(A)& or\\
     [n\,n-1\,n-2\,1\,2\,\dots\,n-4\,n-3]=\gamma^1(B).\\
  \end{cases}
\end{equation}

On the other hand, there are two ways to get a path of length three from $\pi^1$ to $\pi^4$. The first way is presented as follows:
\begin{equation}\label{pi1_to_pi4_1}
\pi^1 r_{n-3}r_{n-1}r_{n-3}=\pi^4
\end{equation}
such that we have:
$$\pi^1=[n\,n-1\,n-2\,\dots\,5\,4\,3\,2\,1]\xrightarrow{r_{n-3}}[4\,5\,\dots\,n-1\,n\,3\,2\,1]\xrightarrow{r_{n-1}}$$
$$[2\,3\,n\,n-1\dots\,5\,4\,1]\xrightarrow{r_{n-3}}[6\,7\,\dots\,n-1\,n\,3\,2\,5\,4\,1]=\pi^4.$$

\begin{figure}[ht]\centering
\includegraphics[scale=0.6]{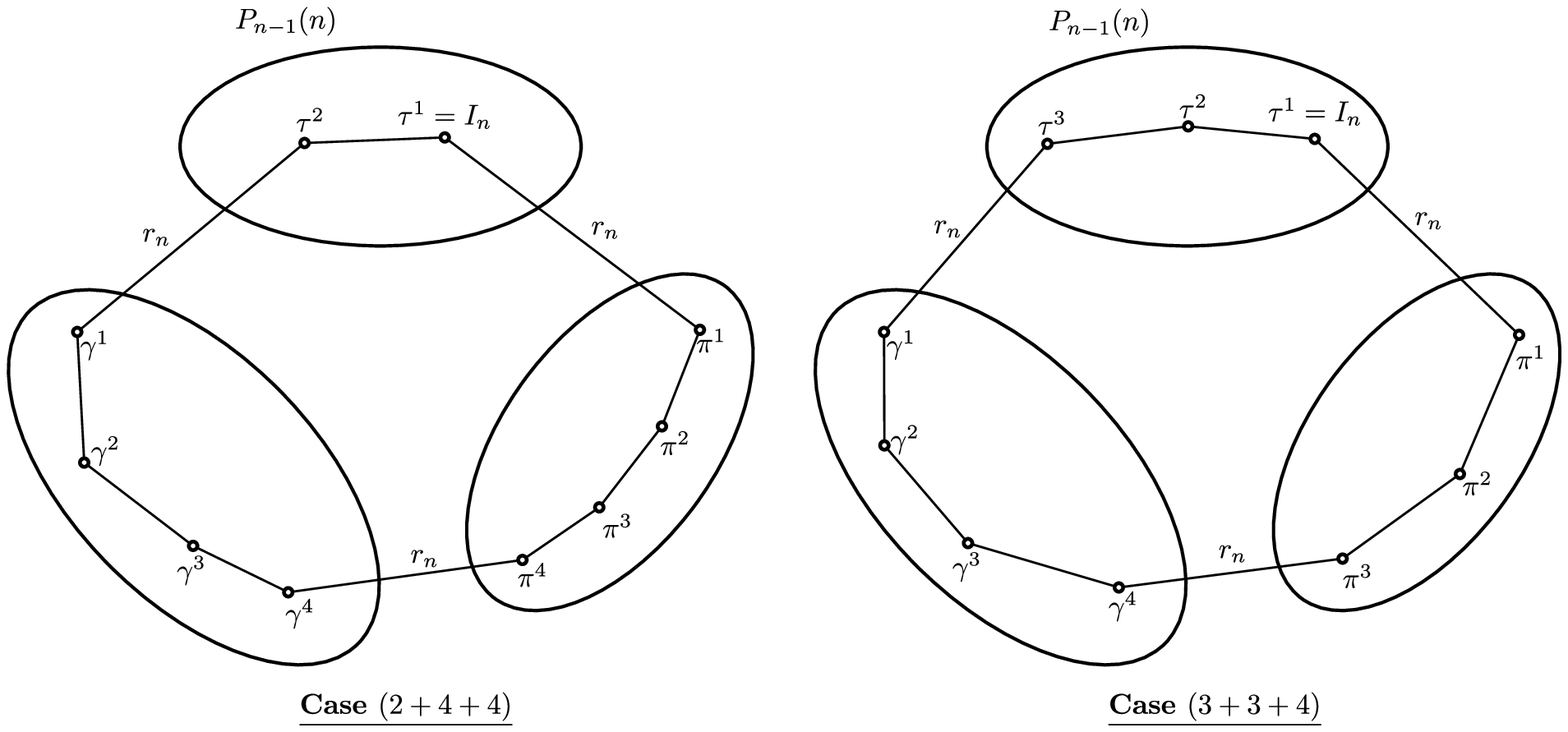}
\caption{Case 2: 10-cycle within $P_n$ has vertices from three copies of $P_{n-1}$}\label{3copy(244+334)}
\end{figure}

The second way is presented as follows:
\begin{equation}\label{pi1_to_pi4_2}
\pi^1 r_{n-1}r_{n-3}r_{n-1}=\pi^4,
\end{equation}
where we have:
$$\pi^1=[n\,n-1\,n-2\,\dots\,2\,1]\xrightarrow{r_{n-1}}[2\,3\,\dots\,n-2\,n-1\,n\,1]\xrightarrow{r_{n-3}}$$
$$[n-2\,n-3\dots\,3\,2\,n-1\,n\,1]\xrightarrow{r_{n-1}}[n\,n-1\,2\,3\,\dots\,n-3\,n-2\,1]=\pi^4.$$

Since $\pi^4=\gamma^4\,r_n,$ then by~(\ref{pi1_to_pi4_1}) and~(\ref{pi1_to_pi4_2}) we obtain:
\begin{equation}\label{gamma1CD}
\gamma^4 =
  \begin{cases}
     [1\,4\,5\,2\,3\,n\,n-1\dots\,7\,6]=\gamma^4(C)& or\\
     [1\,n-2\,\dots\,3\,2\,n-1\,n]=\gamma^4(D).\\
  \end{cases}
\end{equation}

To get a sought $10$-cycle there should be a path of length three between $\gamma^1$ and $\gamma^4$ (see Figure~3, Subcase $(2+4+4)$). Let us check this.

If $n=5$ or $n\geqslant 11$, there is a contradiction with an assumption that both $\gamma^1$ and $\gamma^4$ belong to either $P_{n-1}(n-1)$ or $P_{n-1}(n-3)$.

If $n=7$ then by~(\ref{gamma1AB}) and~(\ref{gamma1CD}) we have $\gamma^1(B)=[7\,1\,2\,3\,4\,5\,6]$ and $\gamma^4(C)=[1\,4\,5\,2\,3\,7\,6]$, but there is no a path of length three between them, since we have either
$$\gamma^4(C)\,r_4\,r_6\,r_4=[4\,1\,3\,7\,5\,2\,6]\neq\gamma^1(B)$$
or
$$\gamma^4(C)\,r_6\,r_4\,r_6=[1\,4\,7\,3\,2\,5\,6]\neq \gamma^1(B).$$

If $n=9$ then by~(\ref{gamma1AB}) and~(\ref{gamma1CD}) we have $\gamma^1(A)=[9\,8\,7\,1\,2\,3\,4\,5\,6]$ and $\gamma^4(C)=[1\,4\,5\,2\,3\,9\,8\,7\,6]$, but there is no a path of length three between them, since we have either
$$\gamma^4(C)\,r_6\,r_8\,r_6=[2\,5\,4\,1\,8\,7\,3\,9\,6]\neq \gamma^1(A)$$
or
$$\gamma^4(C)\,r_8\,r_6\,r_8=[1\,4\,7\,8\,9\,3\,2\,5\,6]\neq \gamma^1(A).$$
Thus, a sought cycle cannot occur in this case.

{\bf \underline{Case $(3+3+4).$}} Suppose that three vertices $\tau^1,\tau^2,\tau^3$ of a sought $10$--cycle belong to one copy, other three vertices $\pi^1,\pi^2,\pi^3$ belong to another copy, and remaining four vertices $\gamma^1,\gamma^2, \gamma^3, \gamma^4$ belong to the third copy (see Figure~3). Let $\pi^1=\tau^1\,r_n$, $\pi^3=\gamma^4\,r_n$, and $\tau^3=\gamma^1\,r_n$. Then both $\gamma^1$ and $\gamma^4$ should belong to either $P_{n-1}(n-1)$ or $P_{n-1}(3)$, since either $\gamma^1=\tau^1 r_{n-1}r_{n-3}r_n$ or $\gamma^1=\tau^1 r_{n-3}r_{n-1}r_n$. More precisely, we have:
\begin{equation}\label{gamma1AB+}
\gamma^1=
\begin{cases}
     [n\,1\,2\,n-1\,n-2\,\dots\,4\,3]=\gamma^1(A)&or\\
     [n\,n-3\,n-4\,\dots\,2\,1\,n-2\,n-1]=\gamma^1(B).\\
  \end{cases}
\end{equation}

On the other hand, since $\pi^1=\tau^1\,r_n=[n\,n-1\,n-2\,\dots\,2\,1]$, then by~(\ref{pi1_to_pi3_1}) and~(\ref{pi1_to_pi3_2}) there are two ways to get a path of length two from $\pi^1$ to $\pi^3$ such that either
$$\pi^1 r_{n-3}r_{n-1}=[2\,3\,n\,n-1\dots\,5\,4\,1]=\pi^3,$$
or
$$\pi^1 r_{n-1}r_{n-3}=[n-2\,n-3\dots\,3\,2\,n-1\,n\,1]=\pi^3,$$
and since $\pi^3=\gamma^4\,r_n$, then we have:

\begin{equation}\label{gamma1CD+}
\gamma^4 =
  \begin{cases}
     [1\,4\,5\,\dots\,n-1\,n\,3\,2]=\gamma^4(C)&or\\
     [1\,n\,n-1\,2\,3\,\dots\,n-3\,n-2]=\gamma^4(D).\\
  \end{cases}
\end{equation}

To get a sought $10$-cycle there should be a path of length three between $\gamma^1$ and $\gamma^4$. Let us check this. If $n=5$ then by~(\ref{gamma1AB+}) and~(\ref{gamma1CD+}) we have $\gamma^1(A)=[5\,1\,2\,4\,3]$ and $\gamma^4(D)=[1\,5\,4\,2\,3]$,  but there is no a path of length three between them, since we have either
$$\gamma^4(D)\,r_2\,r_4\,r_2=[4\,2\,1\,5\,3]\neq \gamma^1(A)$$
or
$$\gamma^4(D)\,r_4\,r_2\,r_4=[1\,5\,2\,4\,3]\neq \gamma^1(A).$$
If $n\geqslant 7$ then by~(\ref{gamma1AB+}) and~(\ref{gamma1CD+}) there is a contradiction with an assumption that both $\gamma^1$ and $\gamma^4$ belong to either $P_{n-1}(n-1)$ or $P_{n-1}(3)$. Hence, a sought cycle cannot occur in this case.

\vspace{5mm}
{\bf Case 3: 10-cycle within $P_n$ has vertices from four copies of $P_{n-1}$}
\vspace{5mm}

There are two possible situations in this case.

\vspace{3mm}

{\bf \underline{Case $(2+2+2+4).$}} Suppose that two vertices $\pi^1,\pi^2$ of a sought $10$--cycle belong to the first copy, two vertices $\tau^1,\tau^2$ belong to the second copy, two vertices $\gamma^1,\gamma^2$ belong to the third copy and remaining four vertices $\sigma^1,\sigma^2, \sigma^3, \sigma^4$ belong to the fourth copy (see Figure~4).

\begin{figure}[ht]\centering
\includegraphics[scale=0.4]{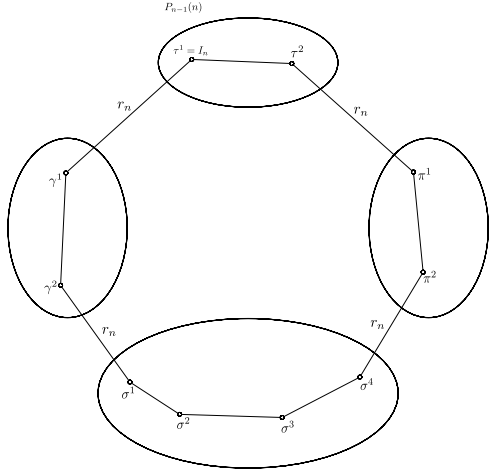}
\caption{Case 3: 10-cycle within $P_n$ has vertices from four copies of $P_{n-1}$}\label{4copy(2224)}
\end{figure}

Let $\pi^1=\tau^2 r_n$, $\pi^2=\sigma^4 r_n$, $\tau^1=\gamma^1 r_n$ and $\gamma^2=\sigma^1 r_n$, then both $\sigma^1$ and $\sigma^4$ should belong to either $P_{n-1}(4)$ or $P_{n-1}(2)$, since either $\sigma^1=\tau^1 r_n r_{n-1}r_n$ or $\sigma^1=\tau^1 r_n r_{n-3} r_n$. More precisely, we have:

\begin{equation}\label{sigma1AB}
\sigma^1 =
  \begin{cases}
     [1\,n\,n-1\,n-2\,\dots\,3\,2]=\sigma^1(A)&or\\
     [1\,2\,3\,n\,n-1\,n-2\,\dots\,6\,5\,4]=\sigma^1(B).\\
  \end{cases}
\end{equation}
On the other hand, similar to Case~$(2+2+6)$ there are four ways to reach $\sigma^4$ by a path of length four from $\tau^1$ such that we have:
\begin{equation}\label{sigma1CDEF}
\sigma^4 =
  \begin{cases}
     [n-3\,n-4\,n-5\,n\,n-1\,n-2\,1\,2\,\dots\,n-7\,n-6]=\sigma^4(C)&or\\
     [n-1\,n-2\,n-3\,n\,1\,2\,\ldots\,n-5\,n-4]=\sigma^4(D)&or\\
     [n-3\,n\,n-1\,n-2\,1\,2\,\dots\,n-5\,n-4]=\sigma^4(E)&or\\
     [n-1\,n\,1\,2\,\dots\,n-3\,n-2]=\sigma^4(F).\\
    \end{cases}
\end{equation}

To get a sought $10$-cycle there should be a path of length three between $\sigma^1$ and $\sigma^4$. Let us check this. If $n=5$ then by~(\ref{sigma1AB}) we have $\sigma^1(B)=[1\,2\,3\,5\,4]$ and $\sigma^4(C)=I_n\,(r_{n-3}\,r_n)^2=[2\,1\,3\,5\,4]$, but there is no a path of length three between them, since we have either

$$\sigma^4(C)\,r_2\,r_4\,r_2=[3\,5\,2\,1\,4]\neq \sigma^1(B)$$
or
$$\sigma^4(C)\,r_4\,r_2\,r_4=[2\,1\,5\,3\,4]\neq \sigma^1(B).$$

If $n\geqslant 7$ then by~(\ref{sigma1AB}) and~(\ref{sigma1CDEF}) there is a contradiction with an assumption that both $\sigma^1$ and $\sigma^4$ belong to either $P_{n-1}(4)$ or $P_{n-1}(2)$. Hence, a sought cycle cannot occur in this case.

\begin{figure}[ht]\centering
\includegraphics[scale=0.5]{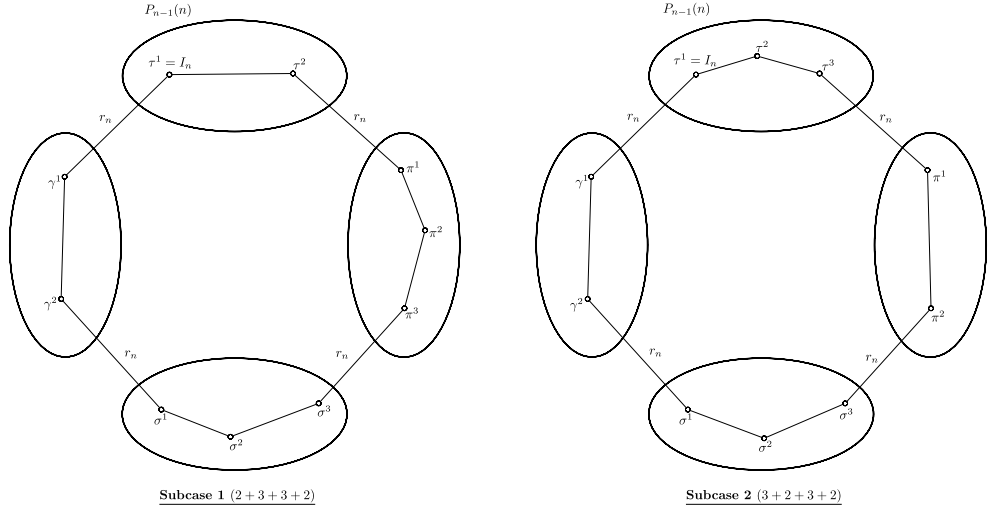}
\caption{Case 3: 10-cycle within $P_n$ has vertices from four copies of $P_{n-1}$}\label{4copy(2233)}
\end{figure}

{\bf \underline{Case $(2+2+3+3).$}}  There are two subcases due to a sequence of vertices from copies forming a cycle: 1) (2,3,3,2); 2) (3,2,3,2) (see Figure~5).

{\bf \underline{Subcase 1)}} Suppose that two vertices $\tau^1,\tau^2$ of a sought $10$--cycle belong to the first copy, three vertices $\pi^1,\pi^2,\pi^3$ belong to the second copy, three vertices $\sigma^1,\sigma^2, \sigma^3$ belong to the third copy and remaining two vertices $\gamma^1,\gamma^2$ belong to the fourth copy (see Figure~5, Subcase 1).

Let $\pi^1=\tau^2\,r_n$, $\sigma^3=\pi^3\,r_n$, and $\gamma^1=\tau^1\,r_n$, $\sigma^1=\gamma^2\,r_n$. Similar to Case~$(2+3+5)$ there are four ways to reach $\sigma^3$ by a path of length five from $\tau^1$ such that we have:
$$\sigma^3 =
  \begin{cases}
     [n-3\,n-6\,n-7\,\dots\,3\,2\,1\,n-2\,n-1\,n\,n-5\,n-4]=\sigma^3(A)&or\\
     [n-1\,n-4\,n-5\,\dots\,3\,2\,1\,n\,n-3\,n-2]=\sigma^3(B)&or\\
     [n-3\,n\,n-1\,n-4\,n-5\,\dots\,3\,2\,1\,n-2]=\sigma^3(C)&or\\
     [n-1\,n\,1\,n-2\,n-3\,\dots\,3\,2]=\sigma^3(D).\\
  \end{cases}$$
On the other hand, since $\gamma^1=\tau^1\,r_n$ and $\tau^1=I_n$ then $\gamma^1=[n\,n-1\,n-2\,\dots\,2\,1]$, and there are two ways to get a path of length one from $\gamma^1$ to $\gamma^2$ such that either
$$\gamma^1=[n\,n-1\,n-2\,\dots\,5\,4\,3\,2\,1]\xrightarrow{r_{n-3}}[4\,5\,\dots\,n-2\,n-1\,n\,3\,2\,1]=\gamma^2$$
or
$$\gamma^1=[n\,n-1\,n-2\,\dots\,3\,2\,1]\xrightarrow{r_{n-1}}[2\,3\,\dots\,n-2\,n-1\,n\,1]=\gamma^2,$$
and since $\sigma^1=\gamma^2\,r_n$ then we have either
$$\sigma^1=[1\,2\,3\,n\,n-1\,n-2\,\dots\,6\,5\,4]=\sigma^1(E)$$
or
$$\sigma^1=[1\,n\,n-1\,n-2\,\dots\,3\,2]=\sigma^1(F).$$

As one can see, vertices $\sigma^1(F)$ and $\sigma^3(D)$ belong to the same copy $P_{n-1}(2)$. Let us check whether there is a path of length two between these two vertices. Indeed, there are two ways to get a path of length two from $\sigma^1(F)$ to $\sigma^3(D)$. The first way is presented as follows:
\begin{equation}\label{sigma1_to_sigma3_1}
\sigma^1 r_{n-1}r_{n-3}=\sigma^3,
\end{equation}
where
$$\sigma^1(F)=[1\,n\,n-1\,\dots\,3\,2]\xrightarrow{r_{n-1}}[3\,4\,\dots\,n-1\,n\,1\,2]\xrightarrow{r_{n-3}}$$ $$[n-1\,n-2\,\dots\,4\,3\,n\,1\,2]\neq\sigma^3(D).$$
The second way is presented as follows:
\begin{equation}\label{sigma1_to_sigma3_2}
\sigma^1 r_{n-3}r_{n-1}=\sigma^3,
\end{equation}
where
$$\sigma^1(F)=[1\,n\,n-1\,\dots\,3\,2]\xrightarrow{r_{n-3}}[5\,6\,\dots\,n-1\,n\,1\,4\,3\,2]\xrightarrow{r_{n-1}}$$ $$[3\,4\,1\,n\,n-1\,\dots\,6\,5\,2]\neq\sigma^3(D).$$

Thus, a sought cycle cannot occur in this subcase.

{\bf \underline{Subcase 2.}} Suppose that three vertices $\tau^1,\tau^2,\tau^3$ of a sought $10$--cycle belong to the first copy, two vertices $\pi^1,\pi^2$ belong to the second copy, three vertices $\sigma^1,\sigma^2, \sigma^3$ belong to the third copy and remaining two vertices $\gamma^1,\gamma^2$  belong to the fourth copy. Let $\pi^1=\tau^3 r_n$, $\sigma^3=\pi^2 r_n$ and $\gamma^1=\tau^1 r_n=I_n$, $\sigma^1=\gamma^2 r_n$ (see Figure~5, Subcase 2). There are two ways to get a path of length two from $\tau^1$ to $\tau^3$. The first way is given as follows:
\begin{equation}\label{tau1_to_tau3_1}
\tau^1 r_{n-3}r_{n-1}=\tau^3,
\end{equation}
where
$$\tau^1=[1\,2\,3\,\dots\,n-1\,n]\xrightarrow{r_{n-3}}[n-3\,n-4\,\dots\,2\,1\,n-2\,n-1\,n]\xrightarrow{r_{n-1}}$$ $$[n-1\,n-2\,1\,2\,\dots\,n-4\,n-3\,n]=\tau^3.$$
The second way is given as follows:
\begin{equation}\label{tau1_to_tau3_2}
\tau^1 r_{n-1}r_{n-3}=\tau^3,
\end{equation}
where
$$\tau^1=[1\,2\,3\,\dots\,n-1\,n]\xrightarrow{r_{n-1}}[n-1\,n-2\,\dots\,2\,1\,n]\xrightarrow{r_{n-3}}$$ $$[3\,4\,\dots\,n-2\,n-1\,2\,1\,n]=\tau^3.$$

Since $\pi^1=\tau^3\,r_n$, then by~(\ref{tau1_to_tau3_1}) and~(\ref{tau1_to_tau3_2}) either $\pi^1=[n\,n-3\,n-4\,\dots\,2\,1\,n-2\,n-1]$ or $\pi^1=[n\,1\,2\,n-1\,n-2\,\dots\,4\,3]$, and since either $\pi^2=\pi^1\,r_{n-1}$ or $\pi^2=\pi^1\,r_{n-3}$ we have:
$$\pi^2 =
  \begin{cases}
     [2\,3\,\dots\,n-4\,n-3\,n\,1\,n-2\,n-1]=\pi^2(A)&or\\
     [6\,7\,\dots\,n-2\,n-1\,2\,1\,n\,5\,4\,3]=\pi^2(B)&or\\
     [n-2\,1\,2\,\dots\,n-4\,n-3\,n\,n-1]=\pi^2(C)&or\\
     [4\,5\,\dots\,n-2\,n-1\,2\,1\,n\,3]=\pi^2(D),\\
  \end{cases}$$
such that with $\sigma^3=\pi^2 r_n$ we obtain:
$$\sigma^3 =
  \begin{cases}
     [n-1\,n-2\,1\,n\,n-3\,n-4\,\dots\,3\,2]=\sigma^3(A)&or\\
     [3\,4\,5\,n\,1\,2\,n-1\,n-2\,\dots\,7\,6]=\sigma^3(B)&or\\
     [n-1\,n\,n-3\,n-4\,\dots\,2\,1\,n-2]=\sigma^3(C)&or\\
     [3\,n\,1\,2\,n-1\,n-2\,\dots\,5\,4]=\sigma^3(D).\\
  \end{cases}$$

On the other hand, there are two ways to get a path of length three from $\tau^1$ to $\sigma^1$ such that either
$$\sigma^1=\tau^1\,r_n\,r_{n-3}\,r_n=[1\,2\,3\,n\,\dots\,6\,5\,4]=\sigma^1(E)$$
or
$$\sigma^1=\tau^1\,r_n\,r_{n-1}\,r_n=[1\,n\,n-1\,\dots\,3\,2]=\sigma^1(F).$$
It is easy to see that vertices $\sigma^1(E)$ and $\sigma^3(D)$ belong to the copy $P_{n-1}(4)$. Let us check whether there is a path of length two between these two vertices. By~(\ref{sigma1_to_sigma3_1}) and~(\ref{sigma1_to_sigma3_2}), there are two ways to get a path of length two from $\sigma^1(E)$ to $\sigma^3(D)$ such that either
$$\sigma^1(E)=[1\,2\,3\,n\,\dots\,6\,5\,4]\xrightarrow{r_{n-1}}[5\,6\,\dots\,n-1\,n\,3\,2\,1\,4]\xrightarrow{r_{n-3}}$$ $$[3\,n\,n-1\,\dots\,6\,5\,2\,1\,4]\neq\sigma^3(D),$$
or
$$\sigma^1(E)=[1\,2\,3\,n\,\dots\,6\,5\,4]\xrightarrow{r_{n-3}}[7\,8\,\dots\,n-1\,n\,3\,2\,1\,6\,5\,4]\xrightarrow{r_{n-1}}$$ $$[5\,6\,1\,2\,3\,n\,n-1\,\dots\,8\,7\,4]\neq\sigma^3(D).$$

Hence, there is no a path of length two between these two vertices.

One can also see that vertices $\sigma^1(F)$ and $\sigma^3(A)$ belong to the copy $P_{n-1}(2)$. Let us check whether there is a path of length two between these two vertices. By~(\ref{sigma1_to_sigma3_1}) and~(\ref{sigma1_to_sigma3_2}), there are two ways to get a path of length two from $\sigma^1(F)$ to $\sigma^3(A)$ such as either
$$\sigma^1(F)=[1\,n\,n-1\,\dots\,3\,2]\xrightarrow{r_{n-1}}[3\,4\,\dots\,n-1\,n\,1\,2]\xrightarrow{r_{n-3}}$$ $$[n-1\,n-2\,\dots\,4\,3\,n\,1\,2]\neq\sigma^3(A),$$
or
$$\sigma^1(F)=[1\,n\,n-1\,\dots\,3\,2]\xrightarrow{r_{n-3}}[5\,6\,\dots\,n-1\,n\,1\,4\,3\,2]\xrightarrow{r_{n-1}}$$ $$[3\,4\,1\,n\,n-1\,\dots\,6\,5\,2]\neq\sigma^3(A).$$

Hence, there is no a path of length two between these two vertices, and a sought cycle cannot occur in this subcase.

\vspace{5mm}
{\bf Case 4: 10-cycle within $P_n$ has vertices from five copies of $P_{n-1}$}
\vspace{5mm}

There is the only possible situation if each of five copies has two vertices.

\begin{figure}[ht]\centering
\includegraphics[scale=0.7]{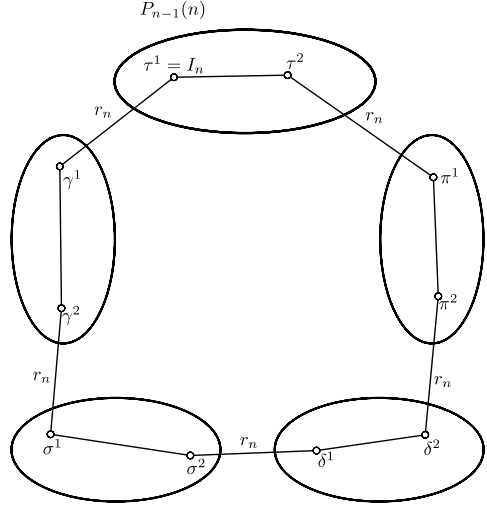}
\caption{Case 4: 10-cycle within $P_n$ has vertices from five copies of $P_{n-1}$}\label{5copy(22222)}
\end{figure}

{\bf \underline{Case $(2+2+2+2+2).$}} Suppose that vertices $\pi^1,\pi^2$ of a sought $10$--cycle belong to the first copy, vertices $\tau^1,\tau^2$ belong to the second copy, vertices $\gamma^1,\gamma^2$ belong to the third copy, vertices $\delta^1,\delta^2$ belong to the fourth copy and vertices $\sigma^1,\sigma^2$ belong to the fifth copy.

Let $\pi^1=\tau^2 r_n$, $\gamma^1=\tau^1\,r_n$, $\sigma^1=\gamma^2\,r_n$, $\delta^2=\pi^2\,r_n$ and $\delta^1=\sigma^2\,r_n$. Since $\tau^2$ can be reached from $\tau^1$  by either $r_{n-1}$ or $r_{n-3}$, and the same we have for $\pi^2$ and $\pi^1$, then there are four ways to get a path of length three from $\tau^1=I_n$ to $\pi^2$ (see Figure~6) such that:
$$\pi^2 =
  \begin{cases}
     [n-6\,n-7\,\dots\,2\,1\,n-2\,n-1\,n\,n-5\,n-4\,n-3]&or\\
     [n-4\,n-5\,\dots\,2\,1\,n\,n-3\,n-2\,n-1]&or\\
     [n-4\,n-5\,\dots\,2\,1\,n-2\,n-1\,n\,n-3]&or\\
     [n-2\,n-3\,\dots\,2\,1\,n\,n-1], \\
  \end{cases}$$
and since $\delta^2=\pi^2\,r_n$ we have:
\begin{equation}\label{delta2}
\delta^2 =
  \begin{cases}
     [n-3\,n-4\,n-5\,n\,n-1\,n-2\,1\,2\,\dots\,n-7\,n-6]&or\\
     [n-1\,n-2\,n-3\,n\,1\,2\,\dots\,n-5\,n-4]&or\\
     [n-3\,n\,n-1\,n-2\,1\,2\,\dots\,n-5\,n-4]&or\\
     [n-1\,n\,1\,2\,\dots\,n-3\,n-2].\\
  \end{cases}
\end{equation}
On the other hand, there are two ways to get a path of length two from $\tau^1$ to $\gamma^2$ such that either $$\tau^1\,r_n\,r_{n-3}=[4\,5\,\dots\,n-1\,n\,3\,2\,1]=\gamma^2$$
or
$$\tau^1\,r_n\,r_{n-1}=[2\,3\,\dots\,n-1\,n\,1]=\gamma^2,$$
and since $\sigma^1=\gamma^2 r_n$ we have:
$$\sigma^1 =
  \begin{cases}
     [1\,2\,3\,n\,\dots\,6\,5\,4]&or\\
     [1\,n\,n-1\,\dots\,3\,2],\\
  \end{cases}$$
and since either $\sigma^2=\sigma^1\,r_{n-1}$ or $\sigma^2=\sigma^1\,r_{n-3}$ we also have:
$$\sigma^2 =
  \begin{cases}
     [7\,8\,\dots\,n-1\,n\,3\,2\,1\,6\,5\,4]&or\\
     [5\,6\,\dots\,n-1\,n\,1\,4\,3\,2]&or\\
     [5\,6\,\dots\,n-1\,n\,3\,2\,1\,4]&or\\
     [3\,4\,\dots\,n-1\,n\,1\,2], \\
  \end{cases}$$
which gives us $\delta^1=\sigma^2\,r_n$ as follows:
\begin{equation}\label{delta1}
\delta^1 =
  \begin{cases}
     [4\,5\,6\,1\,2\,3\,n\,n-1\,\dots\,8\,7]&or\\
     [2\,3\,4\,1\,n\,n-1\,\dots\,6\,5]&or\\
     [4\,1\,2\,3\,n\,n-1\,\dots\,6\,5]&or\\
     [2\,1\,n\,n-1\,\dots\,4\,3].\\
  \end{cases}
\end{equation}

To get a sought $10$-cycle, either $\delta^1=\delta^2\,r_{n-3}$ or $\delta^1=\delta^2\,r_{n-1}$ should hold. Let us check this.

If $n=5$, then by~(\ref{delta2}) and~(\ref{delta1}) we have: \vspace{3mm}

\begin{minipage}{0.4\textwidth}
$$\delta^1 =
  \begin{cases}
     [4\,5\,3\,1\,2]=I_n(r_n\,r_{n-3})^2\,r_n&or\\
     [2\,3\,4\,1\,5]&or\\
     [4\,1\,2\,3\,5]&or\\
     [2\,1\,5\,4\,3],\\
  \end{cases}$$
\end{minipage}
\hspace{5mm}
\begin{minipage}{0.4\textwidth}
$$\delta^2 =
  \begin{cases}
     [4\,3\,2\,5\,1]&or\\
     [4\,5\,1\,2\,3]&or\\
     [2\,1\,3\,5\,4]&or\\
     [2\,5\,4\,3\,1].\\
  \end{cases}$$
\end{minipage}
\vspace{3mm}

As one can see, there are two vertices $[2\,1\,5\,4\,3]$ and $[4\,5\,1\,2\,3]$ belonging to the same copy, and there is the only way to get a sought $10$-cycle containing these vertices by the canonical form $C_{10}=(r_5\,r_4)^5$.

If $n=7$, then by~(\ref{delta2}) and~(\ref{delta1}) we have: \vspace{3mm}

\begin{minipage}{0.4\textwidth}
$$\delta^1 =
  \begin{cases}
     [4\,5\,6\,1\,2\,3\,7]&or\\
     [2\,3\,4\,1\,7\,6\,5]&or\\
     [4\,1\,2\,3\,7\,6\,5]&or\\
     [2\,1\,7\,6\,5\,4\,3],\\
  \end{cases}$$
\end{minipage}
\hspace{5mm}
\begin{minipage}{0.4\textwidth}
$$\delta^2 =
  \begin{cases}
     [4\,3\,2\,7\,6\,5\,1]&or\\
     [6\,5\,4\,7\,1\,2\,3]&or\\
     [4\,7\,6\,5\,1\,2\,3]&or\\
     [6\,7\,1\,2\,3\,4\,5].\\
  \end{cases}$$
\end{minipage}
\vspace{3mm}

\noindent It is evident that neither $[6\,5\,4\,7\,1\,2\,3]$ nor $[4\,7\,6\,5\,1\,2\,3]$ could not be reached from $[2\,1\,7\,6\,5\,4\,3]$ neither by $r_4$ nor by $r_6$. Thus, a sought $10$-cycle can not occur in this case. By using similar arguments, one can check that for $n=9,11,13$ there is no a $10$-cycle in the graph. For any odd $n\geqslant 15$, there is a contradiction with an assumption that both $\delta^1$ and $\delta^2$ belong to the same copy.

This complete the proof since all possible cases are considered. A $10$-cycle in the graph $P_n^5$ occurs only in the case when $n=5$ and with the canonical form $C_{10}=(r_5\,r_4)^5$.

\hfill $\square$

\end{proof}

\begin{theorem} \label{11-cycles} In the cubic Pancake graphs $P_n^{5}, n\geqslant 5,$ there are no cycles of length $11$.
\end{theorem}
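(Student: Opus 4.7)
The plan is to extend the hierarchical case analysis already used to prove Theorem~\ref{10-cycles}. Suppose for contradiction that $P_n^5$ contains an $11$-cycle, and partition its eleven vertices by the copies $P_{n-1}(i)$ they visit. By Lemma~2 of~\cite{KM10}, each visited copy must contribute at least two consecutive vertices, because a single-vertex copy would require two external edges at one vertex, which is impossible since $r_n$ is an involution; moreover, copies contributing only two or three vertices must send their two external edges to distinct copies. One observes immediately that an $11$-cycle lying entirely within one copy is also ruled out, since the subgraph of $P_n^5$ induced by $\{r_{n-3},r_{n-1}\}$ is $2$-regular and is thus a disjoint union of even cycles.

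Writing $11$ as an unordered composition into parts of size at least $2$ gives the candidate distributions
\[
(4+7),\ (5+6),\ (2+2+7),\ (2+3+6),\ (2+4+5),\ (3+3+5),\ (3+4+4),
\]
\[
(2+2+2+5),\ (2+2+3+4),\ (2+3+3+3),\ (2+2+2+2+3),
\]
since six or more copies would already require at least $12$ vertices. I would treat each distribution in exactly the style of Theorem~\ref{10-cycles}: after normalising $\tau^1=I_n=[1\,2\,\ldots\,n]$ in the first copy, I enumerate all internal walks of the prescribed length in every visited copy. Because only $r_{n-3}$ and $r_{n-1}$ act internally and consecutive generators must differ, there are exactly two alternating internal walks of any given length, one starting with $r_{n-3}$ and one with $r_{n-1}$. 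Applying $r_n$ to pass between copies, I then check whether the terminal vertex coincides with the vertex required to close the cycle.

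In most sub-cases the contradiction is immediate from the last coordinate: for all sufficiently large odd $n$ the two permutations that ought to lie in a common copy already have different last entries. The finitely many small-$n$ instances (typically $n\in\{5,7,9,11,13\}$, the same thresholds that appeared inside the proof of Theorem~\ref{10-cycles}) are disposed of by direct computation, verifying that none of the candidate alternating walks in the final copy lands on the required endpoint. The main obstacle is sheer bookkeeping: every composition splits into several sub-cases according to the cyclic order of the copies and the initial internal generator chosen inside each copy, so the argument is long; however, each individual sub-case is settled either by an inspection of the last coordinate or by a short explicit computation, exactly in the spirit of Theorem~\ref{10-cycles}.
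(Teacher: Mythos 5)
Your proposal is correct and follows essentially the same route as the paper: the same appeal to Lemma~2 of~\cite{KM10} to force at least two (and, for the two-copy case, at least four) vertices per visited copy, the same eleven distributions of vertices over two to five copies, the same normalisation $\tau^1=I_n$, and the same mechanism of enumerating the two alternating internal walks per copy, crossing by $r_n$, and closing via a last-coordinate contradiction for large odd $n$ plus direct computation for the small cases $n\in\{5,7,9,11,13\}$. The only caveat is that your write-up defers the explicit permutation computations that constitute the bulk of the paper's argument, but the plan as stated is sound and complete in outline.
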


\begin{proof} To prove this theorem, we use the same arguments as we used to prove Theorem~\ref{10-cycles}. Namely, we consider all possible cases for forming $11$--cycles in the Pancake graphs $P_n$ with taking into account that the generating set of $P_n^{5}$ contains only three elements $r_{n-3},r_{n-1},r_n$, where $n$ is odd. Due to the hierarchical structure of $P_n$, cycles of length $11$ could be formed from paths of length $l, \ 2\leqslant l\leqslant 9$, belonging to different $(n-1)-$copies of $P_n$. Further, we consider all possible options for the distribution of vertices by copies.

Within the proof without loss of generality we always put $\tau^1=I_n=[1\,2\,3\,\dots\,n-1\,n]$.

\vspace{5mm}
{\bf Case 1: $11$--cycle within $P_n$ has vertices from two copies of $P_{n-1}$}
\vspace{5mm}

Suppose that a sought $11$--cycle is formed on vertices from two different copies of $P_{n-1}$. By~\cite[Lemma 2]{KM10}, such a cycle cannot occur if its two (three) vertices belong to one copy and nine (eight) vertices belong to another one. Therefore, a sought cycle must have at least four vertices in each of the two copies. Hence, there are two following cases.

\vspace{3mm}

{\bf \underline{Case $(4+7).$}} Suppose that four vertices $\pi^1,\pi^2,\pi^3,\pi^4$ of a sought $11$--cycle belong to one copy, and other seven vertices $\tau^1,\tau^2,\tau^3,\tau^4,\tau^5,\tau^6,\tau^7$ belong to another copy. Let $\pi^1=\tau^1 r_n$, $\pi^4=\tau^7 r_n$, then $\pi^1$ and $\pi^4$ should belong to $P_{n-1}(1)$. Herewith, the four vertices of $P_{n-1}(1)$ should form a path of length three whose endpoints should be adjacent to vertices from $P_{n-1}(n)$. On the other hand, it is obvious that there are only two ways to get path of length six from $\tau^1$ to $\tau^7$, namely, either:
$$\tau^1\,(r_{n-3}\,r_{n-1})^3=[n-5\,n-6\,n-3\,n-4\,n-1\,n-2\,1\,2\,\dots\,n-8\,n-7\,n]=\tau^7$$
or
$$\tau^1\,(r_{n-1}\,r_{n-3})^3=[7\,8\,\dots\,n-2\,n-1\,2\,1\,4\,3\,6\,5\,n]=\tau^7.$$

Since $\pi^4=\tau^7\,r_n$ then we have either:
$$\pi^4=[n\,n-7\,n-8\dots\,2\,1\,n-2\,n-1\,n-4\,n-3\,n-6\,n-5]$$
or
$$\pi^4=[n\,5\,6\,3\,4\,1\,2\,n-1\,n-2\,\dots\,8\,7].$$

Note that $\pi^1=\tau^1\,r_n=[n\,n-1\,\dots\,2\,1]$ with $\pi_n^1=1$ for any $n\geqslant 5$. Hence, we immediately can conclude that $\pi^4$ and $\pi^1$ belong to different copies of $P_n$ since $\pi_n^4\neq 1$ for any odd $n\geqslant 5$. This gives a contradiction with an assumption that $\pi^1$ and $\pi^4$ belong to the same copy $P_{n-1}(1)$. Thus, a sought $11$--cycle cannot occur in this case.

\vspace{3mm}

{\bf \underline{Case $(5+6).$}} Suppose that five vertices $\pi^1,\pi^2,\pi^3,\pi^4,\pi^5$ of a sought $11$--cycle belong to copy $P_{n-1}(1)$, and other six vertices $\tau^1,\tau^2,\tau^3,\tau^4,\tau^5,\tau^6$ belong to copy $P_{n-1}(n)$, where $\pi^1=\tau^1 r_n$, and $\pi^5=\tau^6 r_n$. Herewith, the five vertices of $P_{n-1}(1)$ should form a path of length four whose endpoints should be adjacent to vertices from $P_{n-1}(n)$. By~(\ref{tau1_to_tau6_1}) and~(\ref{tau1_to_tau6_2}), there are two ways to get paths of length five from $\tau^1$ to $\tau^6$. Moreover, since $\pi^5=\tau^6\,r_n$ then we have either:
$$\pi^5=[n\,n-5\,n-6\,n-3\,n-4\,n-1\,n-2\,1\,2\,\dots\,n-8\,n-7]$$
or
$$\pi^5=[n\,5\,6\,\dots\,n-2\,n-1\,2\,1\,4\,3].$$

Since $\pi^1=\tau^1\,r_n=[n\,n-1\,\dots\,2\,1]$ with $\pi_n^1=1$ for any $n\geqslant 5$, then we immediately can conclude that $\pi^5$ and $\pi^1$ belong to different copies of $P_n$ since $\pi_n^5\neq 1$ for any odd $n\geqslant 5$. This gives a contradiction with an assumption that $\pi^1$ and $\pi^5$ belong to the same copy $P_{n-1}(1)$. Thus, a sought $11$--cycle cannot occur in this case.

\vspace{5mm}
{\bf Case 2: 11-cycle within $P_n$ has vertices from three copies of $P_{n-1}$}
\vspace{5mm}

There are five different situations in this case.

\vspace{3mm}

{\bf \underline{Case $(2+2+7).$}} Suppose that two vertices $\pi^1,\pi^2$ of a sought $11$--cycle belong to one copy, other two vertices $\tau^1,\tau^2$ belong to another copy, and remaining seven vertices $\gamma^1,\gamma^2, \gamma^3, \gamma^4, \gamma^5, \gamma^6,\gamma^7$ belong to the third copy. Let $\pi^1=\tau^2 r_n$, $\pi^2=\gamma^7 r_n$, $\tau^1=\gamma^1 r_n$, then $\gamma^1$ and $\gamma^7$ should belong to $P_{n-1}(1)$. Using similar reasoning shown in the proof of Theorem~\ref{10-cycles}, Case $(2+2+6)$, one can conclude that there are four ways to reach $\gamma^7$ by a path of length four from $\tau^1=I_n$ such that we have:
$$\gamma^7 =
  \begin{cases}
     [n-3\,n-4\,n-5\,n\,n-1\,n-2\,1\,2\,\dots\,n-7\,n-6]=\gamma^7(A)&or\\
     [n-1\,n-2\,n-3\,n\,1\,2\,\ldots\,n-5\,n-4]=\gamma^7(B)&or\\
     [n-3\,n\,n-1\,n-2\,1\,2\,\dots\,n-5\,n-4]=\gamma^7(C)&or\\
     [n-1\,n\,1\,2\,\dots\,n-3\,n-2]=\gamma^7(D).\\
  \end{cases}$$

To get a sought $11$-cycle there should be a path of length six between $\gamma^7$ and $\gamma^1$, where $\gamma^1=\tau^1\,r_n=[n\,n-1\,\dots\,2\,1]$. Let us check this. If $n=5$ then the vertices $\gamma^7(B)=[4\,3\,2\,5\,1]$, $\gamma^7(C)=[2\,5\,4\,3\,1]$ and $\gamma^1=[5\,4\,3\,2\,1]$ belong to the copy $P_{n-1}(1)$, and the following cases are possible:
$$\gamma^7(B)(r_2r_4)^3=[2\,5\,3\,4\,1]\neq \gamma^1$$
or
$$\gamma^7(C)(r_2r_4)^3=[4\,3\,5\,2\,1]\neq \gamma^1,$$
and
$$\gamma^7(B)(r_4r_2)^3=[5\,2\,4\,3\,1]\neq \gamma^1$$
or
$$\gamma^7(C)(r_4r_2)^3=[3\,4\,2\,5\,1]\neq \gamma^1.$$
Hence, a path of length five does not occur between $\gamma^7$ and $\gamma^1$.

If $n=7$ the vertices $\gamma^7(A)=[4\,3\,2\,7\,6\,5\,1]$ and $\gamma^1=[7\,6\,5\,4\,3\,2\,1]$ belong to the copy $P_{n-1}(1)$, and the following two cases are possible:
$$\gamma^7(A)(r_{4}\,r_{6})^3=[3\,4\,7\,2\,5\,6\,1]\neq \gamma^1$$
or
$$\gamma^7(A)(r_{6}\,r_{4})^3=[4\,3\,7\,2\,5\,6\,1]\neq \gamma^1.$$
Again, a path of length five does not occur between $\gamma^7$ and $\gamma^1$.

If $n\geqslant 9$, there is a contradiction with an assumption that $\gamma^1$ and $\gamma^7$ belong to the copy $P_{n-1}(1)$. Thus, a sought cycle cannot occur in this case.

{\bf \underline{Case $(2+3+6).$}} Suppose that two vertices $\tau^1,\tau^2$ of a sought $11$--cycle belong to the first copy, other three vertices $\pi^1,\pi^2,\pi^3$ belong to the second copy, and remaining six vertices $\gamma^1,\gamma^2, \gamma^3, \gamma^4, \gamma^5,\gamma^6$ belong to the third copy. Let $\pi^1=\tau^2 r_n$, $\pi^3=\gamma^6 r_n$, $\tau^1=\gamma^1 r_n$, then $\gamma^1$ and $\gamma^6$ should belong to $P_{n-1}(1)$. Taking into account similar reasoning used in the proof of Theorem~\ref{10-cycles}, {Case $(2+3+5)$}, one can conclude that there are four ways to reach $\gamma^6$ by a path of length five from $\tau^1=I_n$ such that we have:
$$\gamma^6 =
  \begin{cases}
     [n-3\,n-6\,n-7\,\dots2\,1\,n-2\,n-1\,n\,n-5\,n-4]=\gamma^6(A)&or\\
     [n-1\,n-4\,\dots\,3\,2\,1\,n\,n-3\,n-2]=\gamma^6(B)&or\\
     [n-3\,n\,n-1\,n-4\,\dots\,3\,2\,1\,n-2]=\gamma^6(C)&or\\
     [n-1\,n\,\,1\,n-2\,\dots\,3\,2]=\gamma^6(D).\\
  \end{cases}$$

To get a sought $11$-cycle there should be a path of length five between $\gamma^6$ and $\gamma^1$, where $\gamma^1=\tau^1\,r_n=[n\,n-1\,\dots\,2\,1]$. Let us check this. If $n=5$ the vertices $\gamma^6(A)=[2\,4\,5\,3\,1]$ and $\gamma^1=[5\,4\,3\,2\,1]$ belong to the copy $P_{n-1}(1)$, and the following cases are possible:
$$\gamma^6(A)(r_{n-3}\,r_{n-1})^2\,r_{n-3}=[2\,4\,5\,3\,1](r_2\,r_4)^2r_{2}=[2\,4\,3\,5\,1]\neq \gamma^1$$
or
$$\gamma^6(A)(r_{n-1}\,r_{n-3})^2\,r_{n-1}=[2\,4\,5\,3\,1](r_4\,r_2)^2r_{4}=[5\,3\,2\,4\,1]\neq \gamma^1.$$
Hence, a path of length five does not occur between $\gamma^6$ and $\gamma^1$.

If $n\geqslant 7$, there is a contradiction with an assumption that $\gamma^1$ and $\gamma^6$ belong to the copy $P_{n-1}(1)$. Thus, a sought cycle cannot occur in this case.

{\bf \underline{Case $(2+4+5).$}} Suppose that two vertices $\tau^1,\tau^2$ of a sought $11$--cycle belong to the first copy, other four vertices $\pi^1,\pi^2,\pi^3,\pi^4$ belong to the second copy, and remaining five vertices $\gamma^1,\gamma^2, \gamma^3, \gamma^4, \gamma^5$ belong to the third copy. Let $\pi^1=\tau^1 r_n$, $\pi^4=\gamma^5 r_n$, $\tau^2=\gamma^1 r_n$, then $\gamma^1$ and $\gamma^5$ should belong to either $P_{n-1}(n-1)$ or $P_{n-1}(n-3)$, since either $\gamma^1=\tau^1 r_{n-1}r_n$ or $\gamma^1=\tau^1 r_{n-3}r_n$, where:
\begin{equation}\label{gamma1AB_new}
\gamma^1 =
  \begin{cases}
     [n\,1\,2\,\dots\,n-2\,n-1]=\gamma^1(A)&or\\
     [n\,n-1\,n-2\,1\,2\,\dots\,n-4\,n-3]=\gamma^1(B).\\
  \end{cases}
\end{equation}

By the same reasoning used in the proof of Theorem~\ref{10-cycles}, {Case $(2+4+4)$}, one can see that there are two ways to reach $\gamma^5$ by a path of length five from $\tau^1=I_n$ such that we have:
\begin{equation}\label{gamma5CD_new}
\gamma^5 =
  \begin{cases}
     [1\,4\,5\,2\,3\,n\,n-1\dots\,7\,6]=\gamma^5(C)& or\\
     [1\,n-2\,\dots\,3\,2\,n-1\,n]=\gamma^5(D).\\
  \end{cases}
\end{equation}

To get a sought $11$-cycle there should be a path of length four between $\gamma^1$ and $\gamma^5$. Let us check this. If $n=5$ or $n\geqslant 11$, there is a contradiction with an assumption that both $\gamma^1$ and $\gamma^5$ belong to either $P_{n-1}(n-1)$ or $P_{n-1}(n-3)$. If $n=7$ then by~(\ref{gamma1AB_new}) and~(\ref{gamma5CD_new}) we have $\gamma^1(A)=[7\,1\,2\,3\,4\,5\,6]$ and $\gamma^5(C)=[1\,4\,5\,2\,3\,7\,6]$, but there is no a path of length four between them, since we have either
$$\gamma^5(C)(r_4\,r_6)^2=[2\,5\,7\,3\,1\,4\,6]\neq \gamma^1(A)$$
or
$$\gamma^5(C)(r_6\,r_4)^2=[3\,7\,4\,1\,2\,5\,6]\neq \gamma^1(A).$$

If $n=9$ then by~(\ref{gamma1AB_new}) and~(\ref{gamma5CD_new}) we have $\gamma^1(B)=[9\,8\,7\,1\,2\,3\,4\,5\,6]$ and $\gamma^5(C)=[1\,4\,5\,2\,3\,9\,8\,7\,6]$, but there is no a path of length four between them, since we have either
$$\gamma^5(C)(r_6\,r_8)^2=[9\,3\,7\,8\,1\,4\,5\,2\,6]\neq \gamma^1(A)$$
or
$$\gamma^5(C)(r_8\,r_6)^2=[3\,9\,8\,7\,1\,4\,2\,5\,6]\neq \gamma^1(A).$$
Thus, a sought cycle cannot occur in this case.

{\bf \underline{Case $(3+3+5).$}} Suppose that three vertices $\tau^1,\tau^2,\tau^3$ of a sought $11$--cycle belong to the first copy, other three vertices $\pi^1,\pi^2,\pi^3$ belong to the second copy, and remaining five vertices $\gamma^1,\gamma^2, \gamma^3, \gamma^4,\gamma^5$ belong to the third copy. Let $\pi^1=\tau^1 r_n=[n\,n-1\,\dots\,2\,1]$, $\pi^3=\gamma^5 r_n$, $\tau^3=\gamma^1 r_n$, then $\gamma^1$ and $\gamma^5$ should belong to either $P_{n-1}(n-1)$ or $P_{n-1}(3)$, since either $\gamma^1=\tau^1\,r_{n-3}\,r_{n-1}\,r_n$ or $\gamma^1=\tau^1\,r_{n-1}\,r_{n-3}\,r_n$, where
\begin{equation}\label{gamma1AB_335}
\gamma^1 =
  \begin{cases}
     [n\,n-3\,n-4\,\dots\,2\,1\,n-2\,n-1]=\gamma^1(A)&or\\
     [n\,1\,2\,n-1\,n-2\,\dots\,4\,3]=\gamma^1(B).\\
  \end{cases}
\end{equation}
Taking into account the same arguments as we used in the proof of Theorem~\ref{10-cycles}, {Case $(3+3+4)$}, one can conclude that there are two ways to reach $\gamma^5$ by a path of length five from $\tau^1=I_n$ such that we have:
\begin{equation}\label{gamma5CD_335}
\gamma^5 =
  \begin{cases}
     [1\,4\,5\,\dots\,n-1\,n\,3\,2]=\gamma^5(C)&or\\
     [1\,n\,n-1\,2\,3\,\dots\,n-3\,n-2]=\gamma^5(D).\\
  \end{cases}
\end{equation}

To get a sought $11$-cycle there should be a path of length four between $\gamma^1$ and $\gamma^5$. Let us check this. If $n=5$ then by~(\ref{gamma1AB_335}) and~(\ref{gamma5CD_335}) we have $\gamma^1(B)=[5\,1\,2\,4\,3]$ and $\gamma^5(D)=[1\,5\,4\,2\,3]$,  but there is no a path of length four between them, since we have either
$$\gamma^5(D)(r_2\,r_4)^2=[5\,1\,2\,4\,3]\neq \gamma^1(B)$$
or
$$\gamma^5(D)(r_4\,r_2)^2=[5\,1\,2\,4\,3]\neq \gamma^1(B).$$
Hence, a path of length four does not occur between $\gamma^5$ and $\gamma^1$. However, let us note that in this case we have a cycle of length eight given by the canonical form $C_8=(r_4\,r_2)^4$ obtained from~(\ref{C87}) by putting $k=4,\,j=3,\,i=2$.

If $n\geqslant 7$ then by~(\ref{gamma1AB_335}) and~(\ref{gamma5CD_335}) there is a contradiction with an assumption that both $\gamma^1$ and $\gamma^5$ belong to either $P_{n-1}(n-1)$ or $P_{n-1}(3)$. Hence, a sought cycle cannot occur in this case.

{\bf \underline{Case $(3+4+4).$}} Suppose that three vertices $\tau^1,\tau^2,\tau^3$ of a sought $11$--cycle belong to the first copy, other four vertices $\pi^1,\pi^2,\pi^3,\pi^4$ belong to the second copy, and remaining four vertices $\gamma^1,\gamma^2, \gamma^3, \gamma^4$ belong to the third copy. Let $\pi^1=\tau^1 r_n=[n\,n-1\,\dots\,2\,1]$, $\pi^4=\gamma^4 r_n$, $\tau^3=\gamma^4 r_n$, then $\gamma^1$ and $\gamma^4$ should belong to either $P_{n-1}(n-1)$ or $P_{n-1}(3)$, since either $\gamma^1=\tau^1 r_{n-1}r_{n-3}r_n$ or $\gamma^1=\tau^1 r_{n-3}r_{n-1}r_n$, where
\begin{equation}\label{gamma1AB_344}
\gamma^1 =
  \begin{cases}
     [n\,n-3\,n-4\,\dots\,2\,1\,n-2\,n-1]=\gamma^1(A) &or\\
     [n\,1\,2\,n-1\,n-2\,\dots\,4\,3]=\gamma^1(B) .\\
  \end{cases}
\end{equation}
On the other hand, it is obvious that there are two ways to get a path of length five from $\tau^1$ to $\gamma^5$:
\begin{equation}\label{gamma4CD_344}
\gamma^4 =
  \begin{cases}
     \tau^1\,r_n\,r_{n-3}\,r_{n-1}\,r_{n-3}\,r_n=[1\,4\,5\,2\,3\,n\,n-1\dots\,7\,6]=\gamma^4(C) & or\\
     \tau^1\,r_n\,r_{n-1}\,r_{n-3}\,r_{n-1}\,r_n=[1\,n-2\,\dots\,3\,2\,n-1\,n]=\gamma^4(D) .\\
  \end{cases}
\end{equation}

To get a sought $11$-cycle there should be a path of length four between $\gamma^1$ and $\gamma^4$. Let us check this. If $n=5$ then by~(\ref{gamma1AB_344}) and~(\ref{gamma4CD_344}) we have $\gamma^1(B)=[5\,1\,2\,4\,3]$ and $\gamma^4(C)=[1\,4\,5\,2\,3]$,  but there is no a path of length three between them, since we have either
$$\gamma^4(C)\,r_2\,r_4\,r_2=[5\,2\,1\,4\,3]\neq \gamma^1(B)$$
or
$$\gamma^4(C)\,r_4\,r_2\,r_4=[1\,4\,2\,5\,3]\neq \gamma^1(B).$$
Hence, a path of length three does not occur between $\gamma^4$ and $\gamma^1$.

If $n=7$ then by~(\ref{gamma1AB_344}) and~(\ref{gamma4CD_344}) we have $\gamma^1(A)=[7\,4\,3\,2\,1\,5\,6]$ and $\gamma^4(C)=[1\,4\,5\,2\,3\,7\,6]$,  but there is no a path of length three between them, since we have either
$$\gamma^4(C)\,r_4\,r_6\,r_4=[4\,1\,3\,7\,5\,2\,6]\neq \gamma^1(A)$$
or
$$\gamma^4(C)\,r_6\,r_4\,r_6=[1\,4\,7\,3\,2\,5\,6]\neq \gamma^1(A).$$
Again, a path of length three does not occur between $\gamma^4$ and $\gamma^1$.

If $n\geqslant 9$ then there is a contradiction with an assumption that both $\gamma^1$ and $\gamma^4$ belong to either $P_{n-1}(n-1)$ or $P_{n-1}(3)$. Hence, a sought cycle cannot occur in this case.

\vspace{5mm}
{\bf Case 3: 11-cycle within $P_n$ has vertices from four copies of $P_{n-1}$}
\vspace{5mm}

There are three possible situations in this case.

\vspace{3mm}

{\bf \underline{Case $(2+2+2+5).$}} Suppose that two vertices $\pi^1,\pi^2$ of a sought $11$--cycle belong to the first copy, two vertices $\tau^1,\tau^2$ belong to the second copy, two vertices $\gamma^1,\gamma^2$ belong to the third copy and remaining five vertices $\sigma^1,\sigma^2, \sigma^3, \sigma^4,\sigma^5$ belong to the fourth copy. Let $\tau^1=\gamma^1 r_n$, $\gamma^2=\sigma^1 r_n$, $\sigma^5=\pi^2 r_n$ and $\pi^1=\tau^2 r_n$, then $\sigma^1$ and $\sigma^5$ should belong to either $P_{n-1}(2)$ or $P_{n-1}(4)$, since either $\sigma^1=\tau^1 r_n r_{n-1}r_n$ or $\sigma^1=\tau^1 r_n r_{n-3} r_n$ where
\begin{equation}\label{sigma1AB_2225}
\sigma^1 =
  \begin{cases}
     [1\,n\,n-1\,n-2\,\dots\,3\,2]=\sigma^1(A)&or\\
     [1\,2\,3\,n\,n-1\,\dots\,5\,4]=\sigma^1(B).\\
  \end{cases}
\end{equation}
Taking into account the same arguments as we used in the proof of Theorem~\ref{10-cycles}, {Case $(2+2+2+4)$}, one can conclude that there are four ways to reach $\sigma^5$ by a path of length four from $\tau^1=I_n$ such that we have:

\begin{equation}\label{sigma1CDEF_2225}
\sigma^5 =
  \begin{cases}
     [n-3\,n-4\,n-5\,n\,n-1\,n-2\,1\,2\,\dots\,n-7\,n-6]=\sigma^5(C)&or\\
     [n-1\,n-2\,n-3\,n\,1\,2\,\ldots\,n-5\,n-4]=\sigma^5(D)&or\\
     [n-3\,n\,n-1\,n-2\,1\,2\,\dots\,n-5\,n-4]=\sigma^5(E)&or\\
     [n-1\,n\,1\,2\,\dots\,n-3\,n-2]=\sigma^5(F).\\
    \end{cases}
\end{equation}

To get a sought $11$-cycle there should be a path of length four between $\sigma^1$ and $\sigma^5$. Let us check this. If $n=5$ then by~(\ref{sigma1AB_2225}) we have $\sigma^1(B)=[1\,2\,3\,5\,4]$, and $\sigma^5(C)=I_n(r_{n-3}\,r_{n})^2=[2\,1\,3\,5\,4]$, but there is no a path of length four between them, since we have either

$$\sigma^5(C)(r_2\,r_4)^2=[1\,2\,5\,3\,4]\neq \sigma^1(B)$$
or
$$\sigma^5(C)(r_4\,r_2)^2=[2\,1\,5\,3\,4]\neq \sigma^1(B).$$

If $n\geqslant 7$ then by~(\ref{sigma1AB_2225}) and~(\ref{sigma1CDEF_2225}) there is a contradiction with an assumption that both $\sigma^1$ and $\sigma^5$ belong to either $P_{n-1}(2)$ or $P_{n-1}(4)$. Hence, a sought cycle cannot occur in this case.

{\bf \underline{Case $(2+3+3+3).$}} Suppose that three vertices $\tau^1,\tau^2,\tau^3$ of a sought $11$--cycle belong to the first copy, three vertices $\pi^1,\pi^2,\pi^3$ belong to the second copy, three vertices $\gamma^1,\gamma^2,\gamma^3$ belong to the third copy and remaining two vertices $\sigma^1,\sigma^2$ belong to the fourth copy. Let $\tau^1=\pi^1 r_n$, $\pi^3=\sigma^2 r_n$, $\tau^3=\gamma^1 r_n$ and $\gamma^3=\sigma^1 r_n$. Taking into account the same arguments as we used in the proof of Theorem~\ref{10-cycles}, {Case $(3+3+4)$}, one can conclude that there are two ways to reach $\sigma^2$ by a path of length three from $\tau^1=I_n$ such that we have:
\begin{equation}\label{gamma2AB_2333}
\sigma^2 =
  \begin{cases}
     \tau^1\,r_n\,r_{n-3}\,r_{n-1}\,r_{n-3}\,r_n=[1\,4\,5\,2\,3\,n\,n-1\dots\,7\,6]=\sigma^2(A) & or\\
     \tau^1\,r_n\,r_{n-1}\,r_{n-3}\,r_{n-1}\,r_n=[1\,n-2\,\dots\,3\,2\,n-1\,n]=\sigma^2(B) .\\
  \end{cases}
\end{equation}

On the other hand, by~(\ref{tau1_to_tau3_1}) and~(\ref{tau1_to_tau3_2}), and since $\gamma_1=\tau^3 r_n$, there are two ways to get paths of length three from $\tau^1$ to $\gamma^1$ such that either
$$\gamma^1=[n\,n-3\,n-4\,\dots\,2\,1\,n-2\,n-1]$$
or
$$\gamma^1=[n\,1\,2\,n-1\,n-2\,\dots\,4\,3].$$

Then there are two ways to get paths of length two from $\gamma^1$ to $\gamma^3$ such that the first way is presented as follows:
$$\gamma^1 r_{n-3}r_{n-1}=\gamma^3.$$
Namely, we get either
$$\gamma^1=[n\,n-3\,n-4\,\dots\,2\,1\,n-2\,n-1]\xrightarrow{r_{n-3}}[2\,3\,\dots\,n-4\,n-3\,n\,1\,n-2\,n-1]\xrightarrow{r_{n-1}}$$
$$[n-2\,1\,n\,n-3\,n-4\,\dots\,3\,2\,n-1]=\gamma^3$$
or
$$\gamma^1=[n\,1\,2\,n-1\,n-2\,\dots\,4\,3]\xrightarrow{r_{n-3}}[6\,7\,\dots\,n-2\,n-1\,2\,1\,n\,5\,4\,3]\xrightarrow{r_{n-1}}$$
$$[4\,5\,n\,1\,2\,n-1\,n-2\,\dots\,6\,5\,3]=\gamma^3.$$
The second way is presented as follows:
$$\gamma^1 r_{n-1}r_{n-3}=\gamma^3.$$
Namely, we get
$$\gamma^1=[n\,n-3\,n-4\,\dots\,2\,1\,n-2\,n-1]\xrightarrow{r_{n-1}}[n-2\,1\,2\,\dots\,n-4\,n-3\,n\,n-1]\xrightarrow{r_{n-3}}$$
$$[n-4\,n-5\,\dots\,2\,1\,n-2\,n-3\,n\,n-1]=\gamma^3$$
or
$$\gamma^1=[n\,1\,2\,n-1\,n-2\,\dots\,4\,3]\xrightarrow{r_{n-1}}[4\,5\,\dots\,n-2\,n-1\,2\,1\,n\,3]\xrightarrow{r_{n-3}}$$
$$[2\,n-1\,n-2\,\dots\,5\,4\,1\,n\,3]=\gamma^3.$$

Since $\sigma^1=\gamma^3r_n$, we get
\begin{equation}\label{sigma1CDEF_2333}
\sigma^1 =
  \begin{cases}
     [n-1\,2\,3\,\dots\,n-4\,n-3\,n\,1\,n-2]=\sigma^1(C)&or\\
     [3\,5\,6\,\dots\,n-2\,n-1\,2\,1\,n\,5\,4]=\sigma^1(D)&or\\
     [n-1\,n\,n-3\,n-2\,1\,2\,\dots\,n-5\,n-4]=\sigma^1(E)&or\\
     [3\,n\,1\,4\,5\,\dots\,n-1\,2]=\sigma^1(F).\\
  \end{cases}
\end{equation}

To get a sought $11$-cycle vertices $\sigma^1$ and $\sigma^2$ should be adjacent by an internal edge. However, if $n=5$ then by~(\ref{sigma1CDEF_2333}) and~(\ref{gamma2AB_2333}),  we have two non-adjacent vertices $\sigma^1(C)=[4\,2\,5\,1\,3]$ and $\sigma^2(A)=[1\,4\,5\,2\,3]$. If $n\geqslant 7$ then by~(\ref{gamma2AB_2333}) and~(\ref{sigma1CDEF_2333}) there is a contradiction with an assumption that $\sigma^1$ and $\sigma^2$ belong to the same copy. Hence, a sought cycle cannot occur in this case.

{\bf \underline{Case $(2+2+3+4).$}} There are two subcases due to a sequence of vertex from the copies forming a cycle: 1) (2,3,4,2); 2) (3,2,4,2).

{\bf \underline{Subcase 1.}} Suppose that four vertices $\pi^1,\pi^2,\pi^3,\pi^4$ of a sought $11$--cycle belong to the first copy, two vertices $\tau^1,\tau^2$ belong to the second copy, three vertices $\gamma^1,\gamma^2,\gamma^3$ belong to the third copy and remaining two vertices $\sigma^1,\sigma^2$ belong to the fourth copy. Taking into account the same arguments as we used in the proof of Theorem~\ref{10-cycles}, {Case $(2+3+2+3)$}, one can conclude that there are four ways to reach $\sigma^4$ by a path of length five from $\tau^1=I_n$:
$$\sigma^4 =
  \begin{cases}
     [n-3\,n-6\,n-7\,\dots\,3\,2\,1\,n-2\,n-1\,n\,n-5\,n-4]=\sigma^4(A)&or\\
     [n-1\,n-4\,n-5\,\dots\,3\,2\,1\,n\,n-3\,n-2]=\sigma^4(B)&or\\
     [n-3\,n\,n-1\,n-4\,n-5\,\dots\,3\,2\,1\,n-2]=\sigma^4(C)&or\\
     [n-1\,n\,1\,n-2\,n-3\,\dots\,3\,2]=\sigma^4(D).\\
  \end{cases}$$
On the other hand, there are two ways to reach $\sigma^1$ by a path of length three from $\tau^1$ such that we have:
$$\sigma^1 =
  \begin{cases}
     [1\,2\,3\,n\,n-1\,n-2\,\dots\,6\,5\,4]=\sigma^1(E)&or\\
     [1\,n\,n-1\,n-2\,\dots\,3\,2]=\sigma^1(F).\\
  \end{cases}$$

As one can see, vertices $\sigma^1(F)$ and $\sigma^4(D)$ belong to the same copy $P_{n-1}(2)$. Let us check whether there is a path of length three between these two vertices. Indeed, there are two ways to get a path of length three from $\sigma^1(F)$ to $\sigma^4(D)$. The first way is presented as follows:
\begin{equation}\label{sigma1_to_sigma4_1}
\sigma^1 r_{n-1}r_{n-3}r_{n-1}=\sigma^4,
\end{equation}
where
$$\sigma^1(F)=[1\,n\,n-1\,\dots\,3\,2]\xrightarrow{r_{n-1}}[3\,4\,\dots\,n-1\,n\,1\,2]\xrightarrow{r_{n-3}}$$ $$[n-1\,n-2\,\dots\,4\,3\,n\,1\,2]\xrightarrow{r_{n-1}}[1\,n\,3\,4\,\dots\,n-2\,n-1\,2]\neq\sigma^4(D).$$
The second way is presented as follows:
\begin{equation}\label{sigma1_to_sigma4_2}
\sigma^1 r_{n-3}r_{n-1}r_{n-3}=\sigma^4,
\end{equation}
where
$$\sigma^1(F)=[1\,n\,n-1\,\dots\,3\,2]\xrightarrow{r_{n-3}}[5\,6\,\dots\,n-1\,n\,1\,4\,3\,2]\xrightarrow{r_{n-1}}$$ $$[3\,4\,1\,n\,n-1\,\dots\,6\,5\,2]\xrightarrow{r_{n-3}}[7\,8\,\dots\,n-1\,n\,1\,4\,3\,6\,5\,2]\neq\sigma^4(D).$$

Thus, a sought cycle cannot occur in this subcase.

{\bf \underline{Subcase 2.}} Suppose that three vertices $\tau^1,\tau^2,\tau^3$ of a sought $11$--cycle belong to the first copy, two vertices $\pi^1,\pi^2$ belong to the second copy, four vertices $\sigma^1,\sigma^2, \sigma^3, \sigma^4$ belong to the third copy and remaining two vertices $\gamma^1,\gamma^2$  belong to the fourth copy. Let $\pi^1=\tau^3 r_n$, $\pi^2=\sigma^4 r_n$ and $\tau^1=\gamma^1 r_n$, $\gamma^2=\sigma^1 r_n$. Taking into account the same arguments as we used in the proof of Theorem~\ref{10-cycles}, {Case $(3+2+3+2)$}, one can conclude that there are four ways to reach $\sigma^4$ by a path of length five from $\tau^1=I_n$:
$$\sigma^4 =
  \begin{cases}
     [n-1\,n-2\,1\,n\,n-3\,n-4\,\dots\,3\,2]=\sigma^4(A)&or\\
     [3\,4\,5\,n\,1\,2\,n-1\,n-2\,\dots\,7\,6]=\sigma^4(B)&or\\
     [n-1\,n\,n-3\,n-4\,\dots\,2\,1\,n-2]=\sigma^4(C)&or\\
     [3\,n\,1\,2\,n-1\,n-2\,\dots\,5\,4]=\sigma^4(D).\\
  \end{cases}$$
On the other hand, there are two ways to reach $\sigma^1$ by a path of length three from $\tau^1$ such that we have:
$$\sigma^1 =
  \begin{cases}
     [1\,2\,3\,n\,\dots\,6\,5\,4]=\sigma^1(E)&or\\
     [1\,n\,n-1\,n-2\,\dots\,3\,2]=\sigma^1(F).\\
  \end{cases}$$

It is easy to see that vertices $\sigma^1(E)$ and $\sigma^4(D)$ belong to the copy $P_{n-1}(4)$. However, there is no a path of length three between these two vertices. Indeed, by~(\ref{sigma1_to_sigma4_1}) and~(\ref{sigma1_to_sigma4_2}), there are two ways to get a path of length three from $\sigma^1(E)$ to $\sigma^4(D)$ such that either
$$\sigma^1(E)=[1\,2\,3\,n\,\dots\,6\,5\,4]\xrightarrow{r_{n-1}}[5\,6\,\dots\,n-1\,n\,3\,2\,1\,4]\xrightarrow{r_{n-3}}$$ $$[3\,n\,n-1\,\dots\,6\,5\,2\,1\,4]\xrightarrow{r_{n-1}}[1\,2\,5\,6\,\dots\,n-1\,n\,3\,4]\neq\sigma^4(D),$$
or
$$\sigma^1(E)=[1\,2\,3\,n\,\dots\,6\,5\,4]\xrightarrow{r_{n-3}}[7\,8\,\dots\,n-1\,n\,3\,2\,1\,6\,5\,4]\xrightarrow{r_{n-1}}$$ $$[5\,6\,1\,2\,3\,n\,n-1\,\dots\,8\,7\,4]\xrightarrow{r_{n-3}}[9\,10\,\dots\,n-1\,n\,3\,2\,1\,6\,5\,8\,7\,4]\neq\sigma^4(D).$$

Hence, there is no a path of length three between these two vertices.

One can also see that vertices $\sigma^1(F)$ and $\sigma^4(A)$ belong to the copy $P_{n-1}(2)$. Let us check whether there is a path of length three between these two vertices. By~(\ref{sigma1_to_sigma4_1}) and~(\ref{sigma1_to_sigma4_2}), there are two ways to get a path of length three from $\sigma^1(F)$ to $\sigma^4(A)$ such as either
$$\sigma^1(F)=[1\,n\,n-1\,\dots\,3\,2]\xrightarrow{r_{n-1}}[3\,4\,\dots\,n-1\,n\,1\,2]\xrightarrow{r_{n-3}}$$ $$[n-1\,n-2\,\dots\,4\,3\,n\,1\,2]\xrightarrow{r_{n-1}}[1\,n\,3\,4\,\dots\,n-2\,n-1\,2]\neq\sigma^4(A),$$
or
$$\sigma^1(F)=[1\,n\,n-1\,\dots\,3\,2]\xrightarrow{r_{n-3}}[5\,6\,\dots\,n-1\,n\,1\,4\,3\,2]\xrightarrow{r_{n-1}}$$ $$[3\,4\,1\,n\,n-1\,\dots\,6\,5\,2]\xrightarrow{r_{n-3}}[7\,8\,\dots\,n-1\,n\,1\,4\,3\,6\,5\,2]\neq\sigma^4(A).$$

Thus, a sought cycle cannot occur in this subcase.

\vspace{5mm}
{\bf Case 4: 11-cycle within $P_n$ has vertices from five copies of $P_{n-1}$}
\vspace{5mm}

There is the only possible situation in this case.

{\bf \underline{Case $(2+2+2+2+3).$}} Suppose that two vertices $\pi^1,\pi^2$ of a sought $11$--cycle belong to the first copy, two vertices $\tau^1,\tau^2$ belong to the second copy, two vertices $\gamma^1,\gamma^2$ belong to the third copy, three vertices $\delta^1,\delta^2,\delta^3$ belong to the fourth copy and remaining two vertices $\sigma^1,\sigma^2$ belong to the fifth copy. Let $\pi^1=\tau^2 r_n$, $\pi^2=\delta^3 r_n$, $\delta^1=\sigma^2 r_n$, $\sigma^1=\gamma^2 r_n$ and $\gamma^1=\tau^1 r_n$. Taking into account the same arguments as we used in the proof of Theorem~\ref{10-cycles}, {Case $(2+2+2+2+2)$}, one can conclude that there are four ways to reach $\delta^3$ by a path of length four from $\tau^1=I_n$:
\begin{equation}\label{delta3_22223}
\delta^3 =
  \begin{cases}
     [n-3\,n-4\,n-5\,n\,n-1\,n-2\,1\,2\,\dots\,n-7\,n-6]&or\\
     [n-1\,n-2\,n-3\,n\,1\,2\,\dots\,n-5\,n-4]&or\\
     [n-3\,n\,n-1\,n-2\,1\,2\,\dots\,n-5\,n-4]&or\\
     [n-1\,n\,1\,2\,\dots\,n-3\,n-2].\\
  \end{cases}
\end{equation}

On the other hand, there are four ways to reach $\delta^1$ by a path of length five from $\tau^1$ such that we have:
\begin{equation}\label{delta1_22223}
\delta^1 =
  \begin{cases}
     [4\,5\,6\,1\,2\,3\,n\,n-1\,\dots\,8\,7]&or\\
     [2\,3\,4\,1\,n\,n-1\,\dots\,6\,5]&or\\
     [4\,1\,2\,3\,n\,n-1\,\dots\,6\,5]&or\\
     [2\,1\,n\,n-1\,\dots\,4\,3].\\
  \end{cases}
\end{equation}

To get a sought $11$-cycle, either $\delta^1=\delta^3\,r_{n-3}\,r_{n-1}$ or $\delta^1=\delta^3\,r_{n-1}\,r_{n-3}$ should hold. Let us check this. If $n=5$, then by~(\ref{delta1_22223}) and~(\ref{delta3_22223}) we have: \vspace{3mm}

\begin{minipage}{0.4\textwidth}
$$\delta^1 =
  \begin{cases}
     [4\,5\,3\,1\,2]=I_n(r_n\,r_{n-3})^2\,r_n&or\\
     [2\,3\,4\,1\,5]&or\\
     [4\,1\,2\,3\,5]&or\\
     [2\,1\,5\,4\,3],\\
  \end{cases}$$
\end{minipage}
\hspace{5mm}
\begin{minipage}{0.4\textwidth}
$$\delta^3 =
  \begin{cases}
     [4\,3\,2\,5\,1]&or\\
     [4\,5\,1\,2\,3]&or\\
     [2\,1\,3\,5\,4]&or\\
     [2\,5\,4\,3\,1].\\
  \end{cases}$$
\end{minipage}
\vspace{3mm}

As one can see, there are no a path of length two between $\delta^1$ and $\delta^3$. Thus, a sought $11$-cycle can not occur in this case. If $n=7$, then by~(\ref{delta1_22223}) and~(\ref{delta3_22223}) %by~(\ref{delta2}) and~(\ref{delta1})
we have: \vspace{3mm}

\begin{minipage}{0.4\textwidth}
$$\delta^1 =
  \begin{cases}
     [4\,5\,6\,1\,2\,3\,7]&or\\
     [2\,3\,4\,1\,7\,6\,5]&or\\
     [4\,1\,2\,3\,7\,6\,5]&or\\
     [2\,1\,7\,6\,5\,4\,3],\\
  \end{cases}$$
\end{minipage}
\hspace{5mm}
\begin{minipage}{0.4\textwidth}
$$\delta^3 =
  \begin{cases}
     [4\,3\,2\,7\,6\,5\,1]&or\\
     [6\,5\,4\,7\,1\,2\,3]&or\\
     [4\,7\,6\,5\,1\,2\,3]&or\\
     [6\,7\,1\,2\,3\,4\,5].\\
  \end{cases}$$
\end{minipage}
\vspace{3mm}

\noindent Again, a sought $11$-cycle can not occur in this case, since there is no a path of length two between $\delta^1$ and $\delta^3$. By using similar arguments, one can check that for $n=9,11,13$ there is no a $11$-cycle in the graph. For any odd $n\geqslant 15$, there is a contradiction with an assumption that both $\delta^1$ and $\delta^3$ belong to the same copy. This complete the proof of the last case of Theorem~\ref{11-cycles}. \hfill $\square$

\end{proof}

\section{Proof of Theorem~\ref{girth-six-generating-sets}}\label{proof}

It is obvious that any cycle of the cubic Pancake graphs $P_n^i,\ i=1,\ldots,5,$ does belong to the Pancake graph $P_n, n\geqslant 4$, and should be described by one of the canonical formulas from Theorem~\ref{67-cycles} and Theorem~\ref{8-cycles}. Let us check which cycles appear in $P_n^i$ for each  $i\in\{1,\ldots,5\}.$ \\

\noindent {\bf Case (1.1)} Any cycle of $P_n^1, n\geqslant 4$, is formed by prefix--reversals from the set $BS_1=\{r_2,r_{n-1},r_n\}$. If $n=4$ then by Theorem~\ref{67-cycles} the prefix--reversals $r_2$ and $r_3$ give $6$-cycles of the form~(\ref{C6}). For $n\geqslant 4$, by~(\ref{C7}) there are no $7$--cycles in $P_n^1$, but there are $8$--cycles of the form~(\ref{C82}) when $n=4$ and there are $8$--cycles of the form~(\ref{C87}) for $n\geqslant 5$ if we put $k=n$, $i=2$, $j=n-1$. The canonical form in the last case is given by $(r_n\,r_2)^4$ for any $n\geqslant 5$. Hence, the formula~(\ref{BS123}) holds.

Using similar arguments, one can see that any cycle of $P_n^2, n\geqslant 4$, is presented by prefix--reversals from the set $BS_2=\{r_{n-2},r_{n-1},r_n\}$. If $n=4$ then obviously $P_4^2$ has $6$--cycles. If $n>4$ then by~(\ref{C7}) there are no $7$--cycles in $P_n^2$, however by Theorem~\ref{8-cycles} there are $8$--cycles of the canonical form~(\ref{C81}). Indeed, if we put $k=n$, $j=n-1$, $i=n-2$ in~(\ref{C81}) then we have the sequence $r_n\,r_{n-1}\,r_{n-2}\,r_{n-1}\,r_n\,r_{n-1}\,r_{n-2}\,r_{n-1}$. Thus, the formula~(\ref{BS123}) holds in this case.

The same arguments appear for the graph $P_n^3, n\geqslant 4$ is even, whose generating set contains prefix--reversals $r_3$, $r_{n-2}$ and $r_n$. It has $6$--cycles of the form~(\ref{C6}) and $8$--cycles of the form~(\ref{C88}) if $n=4$, but it does not have $7$--cycles for $n\geqslant 4$. Moreover, for any even $n\geqslant 6$ in $P_n^3$ there are no $6$--cycles, but there are $8$--cycles of the form~(\ref{C87}) if we put $k=n$, $i=3$,  $j=n-2$. The canonical form of $8$--cycles in this case is given by $(r_n\,r_3)^4$ for any even $n\geqslant 6$. Thus, the formula~(\ref{BS123}) holds. \\

\noindent {\bf Case (1.2)}  In the case of the graph $P_n^4, n\geqslant 5$ is odd, its generating elements $r_3$, $r_{n-1}$ and $r_n$ give $8$--cycles of the canonical form $(r_n\,r_{n-1}\,r_n\,r_3)^2$ if we put $k=n$, $j=2$ and $i=3$ in the form~(\ref{C87}). Obviously, there are no $6$--cycles in the graph since $r_2$ does not belong to the generating set for any $n\geqslant 5$. Hence, the formula~(\ref{BS4}) holds for any odd $n\geqslant 5$. \\

\noindent {\bf Case (1.3)} The generating set $BS_5=\{r_{n-3},r_{n-1},r_n\}$ of the graph $P_n^5$, where $n\geqslant 5$ is odd, gives the canonical form $(r_5\,r_4\,r_5\,r_2)^2$ of $8$--cycles if we put $k=5$, $j=2$ and $i=2$ in the form~(\ref{C87}). By Theorem~\ref{67-cycles} there are no $6$--cycles in the graph for any $n\geqslant 5$. By Theorem~\ref{8-cycles} and by the characterization of $9$-cycles in the Pancake graph~\cite[Theorem~4]{KM11} there are no $8$-- and $9$--cycles in $P_n^5$ for any $n\geqslant 7$. By Theorem~\ref{10-cycles} and Theorem~\ref{11-cycles} for any $n\geqslant 7$ there are no $10$--cycles and $11$--cycles in $P_n^5$, and the smallest cycle in $P_n^5$ is $12$--cycle of the canonical form $C_{12}=(r_n\,r_{n-1}\,r_n\,r_{n-1}\,r_{n-3}\,r_{n-1})^2$. This complete the proof of Theorem~\ref{girth-six-generating-sets}. \hfill $\square$

\section{Acknowledgements}

The first author is supported by the project No.~FWNF-2022-0017 (the state contract of the Sobolev Institute of Mathematics). The second author was partially supported by Mathematical Center in Akademgorodok under agreement No. 075-15-2019-1613 with the Ministry of Science and Higher Education of the Russian Federation.

\end{document}